\newcommand{\varh}{{\rm Var}_\H}
\def\eqref#1{equation~\ref{#1}}
\def\1{\bm{1}}
\def\eps{{\epsilon}}
\def\vxi{{\bm{\xi}}}
\def\va{{\bm{a}}}
\def\vb{{\bm{b}}}
\def\ve{{\bm{e}}}
\def\vg{{\bm{g}}}
\def\vm{{\bm{m}}}
\def\vu{{\bm{u}}}
\def\vx{{\bm{x}}}
\def\vy{{\bm{y}}}
\def\vz{{\bm{z}}}
\def\mA{{\bm{A}}}
\def\mB{{\bm{B}}}
\def\mD{{\bm{D}}}
\def\mE{{\bm{E}}}
\def\mI{{\bm{I}}}
\def\mJ{{\bm{J}}}
\def\mL{{\bm{L}}}
\def\mM{{\bm{M}}}
\def\mP{{\bm{P}}}
\def\mU{{\bm{U}}}
\def\mW{{\bm{W}}}
\def\mX{{\bm{X}}}
\DeclareMathAlphabet{\mathsfit}{\encodingdefault}{\sfdefault}{m}{sl}
\SetMathAlphabet{\mathsfit}{bold}{\encodingdefault}{\sfdefault}{bx}{n}
\newcommand{\E}{\mathbb{E}}
\newcommand{\R}{\mathbb{R}}
\DeclareMathOperator*{\argmax}{arg\,max}
\DeclareMathOperator*{\argmin}{arg\,min}
\newcommand{\N}{\mathbb{N}}
\newcommand{\C}{\mathbb{C}}
\newcommand{\w}{w}
\renewcommand{\H}{{\cal H}} %
\newcommand{\B}{{\cal B}} %
\newcommand{\G}{{\cal G}}
\newcommand{\D}{\mathcal{D}}
\DeclareMathOperator{\clip}{Clip}
\newcommand{\ClippedGossip}{\mathrm{ClippedGossip}}
\newcommand{\NNA}{\mathrm{NNA}}
\newcommand{\vect}{\mathrm{span}}
\newcommand{\diag}{\mathrm{Diag}}
\newcommand{\RG}{{\rm RG}}
\newcommand{\CSHe}{\mathrm{CS_{\text{He}}^{\text{or.}}}}
\newcommand{\CSHeada}{\mathrm{CS_{\text{He}}}}
\newcommand{\FRG}{\mathrm{F\text{--}RG}}
\newcommand{\genRG}[1]{\mathrm{#1\text{--}RG}}
\newcommand{\GTS}{\mathrm{GTS}}
\newcommand{\CSours}{\mathrm{CS_{\text{+}}}}
\newcommand{\CSoursor}{\mathrm{CS_{\text{+}}^{\text{or.}}}}
\newcommand{\CGours}{\genRG{\CSours}}
\newcommand{\mumin}{\mu_{\min}}
\newcommand{\mudeuxGh}{\mu_2(\G_\H)} 
\newcommand{\mumaxGh}{\mu_{\max}(\G_\H)} 
\newcommand{\mumaxGfull}{\mu_{\max}(\G)} 
\newcommand{\mudeuxGfull}{\mu_{2}(\G)}
\renewcommand{\le}{\leq}
\newcolumntype{C}{>{\Centering}X}
\theoremstyle{plain}
\newtheorem{theorem}{Theorem}[section]
\newtheorem{proposition}[theorem]{Proposition}
\newtheorem{lemma}[theorem]{Lemma}
\newtheorem{corollary}[theorem]{Corollary}
\theoremstyle{definition}
\newtheorem{definition}[theorem]{Definition}
\newtheorem{assumption}[theorem]{Assumption}
\theoremstyle{remark}
\newtheorem{remark}[theorem]{Remark}
\icmltitlerunning{Unified Breakdown Analysis for Byzantine Robust Gossip}
\begin{document}

\twocolumn[
\icmltitle{Unified Breakdown Analysis for Byzantine Robust Gossip}

\begin{icmlauthorlist}
\icmlauthor{Renaud Gaucher}{x,inria}
\icmlauthor{Aymeric Dieuleveut}{x}
\icmlauthor{Hadrien Hendrikx}{inria}
\end{icmlauthorlist}

\icmlaffiliation{x}{Centre de mathématiques appliquées,
    École polytechnique, Institut Polytechnique de Paris, Palaiseau France}
\icmlaffiliation{inria}{Centre Inria de l'Univ. Grenoble Alpes, CNRS, LJK, Grenoble, France}

\icmlcorrespondingauthor{Renaud Gaucher}{renaud.gaucher@polytechnique.edu}

\icmlkeywords{Machine Learning, ICML}

\vskip 0.3in
]

\printAffiliationsAndNotice{} %

\begin{abstract}
In decentralized machine learning, different devices communicate in a peer-to-peer manner to collaboratively learn from each other's data. Such approaches are vulnerable to misbehaving (or Byzantine) devices. 
We introduce $\FRG$, a general framework for building robust decentralized algorithms with guarantees arising from robust-sum-like aggregation rules $\mathrm{F}$. We then investigate the notion of \emph{breakdown point}, and show an upper bound on the number of adversaries that decentralized algorithms can tolerate. We introduce a practical robust aggregation rule, coined $\CSours$, such that $\CSours\text{--}\RG$ has a near-optimal breakdown. Other choices of aggregation rules lead to existing algorithms such as $\rm ClippedGossip$ or $\rm NNA$. We give experimental evidence to validate the effectiveness of $\mathrm{\CSours\text{--}\RG}$ and highlight the gap with $\mathrm{NNA}$, in particular against a novel attack tailored to decentralized communications. 
\end{abstract}

\section{Introduction}

Distributed machine learning, in which the training process is performed on multiple computing units (or nodes), responds to the increasingly distributed nature of data, its sensitivity, and the rising computational cost of optimizing models. We investigate the decentralized paradigm of distributed learning, in which nodes communicate in a peer-to-peer manner within a communication network, in opposition to distributed architectures relying on a central server that coordinates all units. %

Distributing the training over a large number of devices introduces new security issues: software may be faulty, local data may be corrupted, and nodes can be hacked or even controlled by a hostile party. Such issues are modeled as \textit{Byzantine} node failures \citep{lamport1982byzantine}, defined as omniscient adversaries able to collude with each other. Standard distributed learning methods are known to be vulnerable to these Byzantine attacks \citep{blanchard2017machine}. This has led to significant efforts in the development of robust distributed learning algorithms. From the first works on Byzantine-robust SGD \citep{blanchard2017machine, yin2018byzantine, alistarh2018byzantine, el2020genuinely}, methods have been developed to tackle stochastic noise using Polyak momentum \citep{karimireddy2021learning, farhadkhani2022byzantine} and mixing strategies to handle heterogeneous loss functions \citep{karimireddy2020byzantine, allouah2023fixing}. In parallel to these robust algorithms, efficient attacks have been developed to challenge Byzantine-robust algorithms \citep{baruch2019little, xie2020fall}. Recently, \citep{allouahadaptive2025} have used pre-aggregation adaptive clipping to improve the robustness. To bridge the gap between algorithm performance and achievable accuracy in the Byzantine setting, tight lower bounds have been constructed for the heterogeneous setting \citep{karimireddy2020byzantine, allouah2024robust}. Yet, all these works rely on a trusted central server to coordinate the training.  

In contrast, the decentralized case has been less explored. Especially when the communication network, abstracted as a graph where vertices represent computing units that communicate through edges, is not fully connected (a.k.a. sparse). For instance, understanding \emph{how many Byzantine nodes can be tolerated over an arbitrary network before a communication protocol fails} is an open question (that we address in this paper). Indeed, the network is often assumed to be fully connected \citep{el2021collaborative, farhadkhani2023robust}. If the work of \citep{fang2022bridge} addresses the case of sparse networks, they only consider homogeneous losses. Closer to our setting, \citet{he2022byzantine, wu2023byzantine} tackle Byzantine optimization on sparse networks with heterogeneous losses, however, they only achieve suboptimal robustness, as their guarantees vanish with either the number of nodes or the connectivity of the network. Moreover, the communication scheme proposed in \citet{he2022byzantine} relies on inaccessible information. Similarly, there is a lack of attacks designed to challenge decentralized optimization. Up to our knowledge, only \citet{he2022byzantine} propose one, named \textit{Dissensus}. Still, a few previous works \citep{wu2023byzantine, farhadkhani2023robust} have investigated generic criteria to go from communication schemes to robust distributed SGD frameworks.

Our work proposes a generic method to design algorithms that solve the aforementioned shortcomings. To do so, we carefully study the decentralized mean estimation problem. This seemingly simple problem retains most of the difficulty of handling Byzantine nodes while allowing us to derive strong convergence and robustness guarantees. Our solution relies on robust adaptations of the \emph{gossip} communication, a popular scheme for decentralized communication. We then tackle general (smooth non-convex) optimization problems through a reduction. Our contributions are the following:

\textbf{1 - Unifying algorithmic framework.} We develop a generic method (the $\RG$ method) to construct and analyze robust communication algorithms. It is based on the decentralized application of robust aggregation rules. This $\RG$ method recovers $\NNA$ \citep{farhadkhani2022byzantine} and $\ClippedGossip$ \citep{he2022byzantine} in specific cases. We use the $\RG$ method to build $\CSours\text{--}\RG$, a novel communication algorithm that is both practical and adapted to a sparse communication network.

\textbf{2 - Tight theoretical guarantees.} We show that $\RG$ provides robust convergence guarantees as soon as the underlying aggregation rule verifies $(b,\rho)$--robustness, a new robustness criterion that we introduce, and the weight of Byzantine nodes is not too large (with respect to the \emph{algebraic connectivity}, a spectral property of the communication graph). We also show the converse result, that is, no robustness guarantees can be obtained if the weight of Byzantine nodes exceeds this threshold. Our bounds match each other for specific aggregation rules. These results generalize the standard fully-connected breakdown point of $\nicefrac{1}{3}$ of Byzantines nodes to arbitrary sparse networks.

Besides these core contributions, we introduce a theoretically grounded attack, called \textit{Spectral Heterogeneity}, specifically designed to challenge decentralized algorithms by leveraging spectral properties of the communication graph.

The remainder of the paper is organized as follows. \Cref{sec:background} formalizes the problem of Byzantine-robust decentralized optimization. \Cref{sec:robust_gossip_framework} presents the robust gossip framework as well as the main convergence guarantees. Then, \Cref{sec:example_rules} instanciates the general framework with several rules (and the associated robustness guarantees), and provides guarantees for a D-SGD algorithm based on $\FRG$.
Finally, \Cref{sec:attacks} presents the \textit{Spectral Heterogeneity} attack, which is then used with other attacks to experimentally challenge several aggregation schemes in \Cref{sec:experiments}.

\section{Background}

\label{sec:background}

\subsection{Decentralized optimization.} 

We consider a system composed of $m$ computing units that communicate synchronously through a communication network, which is represented as an undirected graph $\G$. We denote by $\H$ the set of honest nodes, and $\B$ the (unknown) set of Byzantine nodes. Each unit $i$ holds a local parameter $\vx_i \in \R^d$, a local loss function $f_i: \R^d \rightarrow \R$, and can communicate with its neighbors in the graph $\G$. We denote the set of neighbors of node $i$ by $n(i)$ and by $n_{\H}(i)$ (resp. $n_{\B}(i)$) the set of honest (resp. Byzantine) ones.
We study decentralized algorithms for solving
\begin{equation}
    \label{eq:finite_sum_on_honest}
    \argmin_{\vx\in \R^d} \left\{ f_{\H}(\vx) := \frac{1}{|\H|}\sum_{i \in \H} f_i(\vx)\right\}.
\end{equation}

Due to the averaging nature of~\Cref{eq:finite_sum_on_honest}, centralized algorithms for solving this problem rely on global averaging of the gradients computed at each node. In the decentralized setting, we rely on performing local (node-wise) inexact averaging steps instead. 

\textbf{Gossip Communication.} Standard decentralized optimization algorithms typically rely on the so-called \textit{gossip} communication protocol \citep{boyd2006randomized, nedic2009distributed, scaman2017optimal, kovalev2020optimal}. The gossip protocol consists of updating the parameter of a node $i$ through a linear combination of the parameters of its neighbors, with updates of the form
\(\vx_i^{t+1} = \vx_i^t - \eta \sum_{ j=1}^m w_{ij} (\vx_i^t - \vx_j^t),\)
where $w_{ij}$ is a weight associated with the edge $(ij)$ of the graph and $\eta \ge 0$ will denote a communication step-size. By denoting $\mW$ the Laplacian matrix associated with the weighted graph $\G$, i.e $\mW_{ij} = - w_{ij}$ if $i\neq j$ and $\mW_{ii} = \sum_{j \in n(i)}w_{ij}$, and denoting the matrix of parameters as $\mX = (\vx_1, \ldots,\vx_m)^T$, then the gossip update conveniently writes:
\begin{equation}
\label{eq:non_robust_gossip}
    \mX^{t+1} = \mX^t - \eta \mW \mX^t.
\end{equation}
The Laplacian matrix (a.k.a.~gossip matrix), is symmetric non-negative. When the graph is connected, its kernel is restricted to the line of the constant vectors, i.e.~$\vect(1,\ldots,1)^T$. In the following, we will always consider that the graph $\G$ is weighted, so that a unique Laplacian matrix is associated with each graph $\G$. Moreover, we denote by $\mumaxGh$ and $ \mudeuxGh$ the largest and smallest non-zero eigenvalues of the Laplacian matrix  $\mW_{\H}$ of the \textit{honest subgraph} $\G_{\H}$, and by $\gamma = \mudeuxGh / \mumaxGh$ its \emph{spectral gap}. Spectral properties of the Laplacian matrix are known to characterize the convergence of gossip optimization methods. For instance, in the absence of Byzantine nodes, \cref{eq:non_robust_gossip} with step-size $\eta \le \mumaxGfull^{-1}$ leads to a linear convergence of the nodes parameter values towards the average of the initial parameters: \(\|\mX^t - \overline{\mX}^0\|^2 \le (1-\eta \mudeuxGfull)^t \|\mX^0 - \overline{\mX}^0\|^2\), for $\overline{\mX}^0$ the matrix with columns $m^{-1} \sum_{j=1}^m \vx_j^0$.

\textbf{Robustness Issue.} Gossip communication relies on updating the nodes' parameters by performing non-robust local averaging. As such, similarly to the centralized case, any Byzantine neighbor of node $i$ can drive the update to any desired value \citep{blanchard2017machine}. Then, the poisoned information spreads through gossip communications.

\subsection{Byzantine-robust decentralized optimization.}
In this section, we describe the threat model and the robustness criterion that we consider.

{\textbf{Threat model.} 
We consider Byzantine nodes to be unknown omniscient adversaries, able to collude and send distinct values to each of their neighbors. We measure their influence by considering the weight of Byzantine nodes in the neighborhood of each honest node, $(b(i) := \sum_{j \in n_{\B}(i)}\w_{ij})_{i \in \H}$ (as in~\citet{leblanc2013resilient, he2022byzantine}), instead of the total number of Byzantine nodes $|\B|$ as done for the centralized or fully-connected setting. Similarly, we denote by $h(i) = \sum_{j \in n_{\H}(i)} w_{ij}$ the weights of the honest nodes adjacent to the node $i$.

In the case of an arbitrary communication network, the number of honest nodes does not provide enough information by itself, as the results depend on \textit{how} the nodes are linked, i.e., the topology of the honest subgraph. 
Therefore, in the case of sparse topologies, it is necessary to consider a property of the graph related to its structure rather than relying solely on the number of honest nodes. 
We will show that spectral properties of the Laplacian of honest subgraph are relevant quantities for robustness analyses and introduce the following class of graphs.

\begin{definition}
\label{def:gamma_class}
  For any $\mumin\geq 0$ and $b \ge 0$, we define the class of weighted graphs 
  $$\Gamma_{\mumin, b} =\left\lbrace \G \ \text{ s.t. }  \mu_2(\G_\H)\geq \mumin \text{ and } \max_{i\in \mathcal H} b(i) \le b\right\rbrace.
  $$
\end{definition}
In other words, we introduce a subset of all possible graphs, partitioning in terms of (i) the second smallest eigenvalue of the honest subgraph, (a.k.a. the \textit{algebraic connectivity}), that is restricted to be larger than a minimal value $\mumin$,  and (ii) the maximal weight of Byzantines in the neighborhood of an honest node, which is restricted to be smaller than $b$.
 
Note that if all edges are equally weighted ($\w_{ij} = \omega$), then $b(i) = |n_\B(i)|\cdot \omega$, and (ii) boils down to upper bounding the number of Byzantines in the neighborhood of honest nodes by $f := b\cdot \omega^{-1}$. Hence, ~\Cref{def:gamma_class} is an extension of the standard \emph{``there are at most $f$ byzantine nodes among the $|\B|+|\H|$ nodes''} to the setting of arbitrary connected graphs. We point out that this class of graphs depends on the spectral properties \emph{of the honest subgraph}, meaning that for a given communication network, these properties change depending on the location of Byzantine failures, and the associated graph can either fall within $\Gamma_{\mumin,b}$ if Byzantines are ``well spread'', or not, e.g. if they are adversarially chosen.

\textbf{Approximate Average Consensus.}\label{approximate_average_onsensus} The average consensus problem consists in finding the average of vectors locally held by honest nodes $(\vx_i)_{i\in \H}$. It is a specific case of \Cref{eq:finite_sum_on_honest} obtained by considering $f_i(\vx)=\|\vx - \vx_i\|^2$. %
Due to adversarial attacks only an \textit{approximate} estimate of the average of honest nodes vector $\overline{\vx}_{\H}:= |\H|^{-1}\sum_{i \in \H}\vx_i$ can be reached \citep{karimireddy2020byzantine}. We therefore introduce the following criterion to assess the robustness of a communication algorithm.

\begin{definition}[$r$-robustness on $\G$.]
    \label{def:r-robustness}
    Let $r <1$. A communication algorithm $\mathrm{Alg}$ is $r$-robust on a graph $\G$ if from any initial local parameters $(\vx_i)_{i \in \H} \in (\R^d)^{\H}$, it allows any honest node $i$ to compute a vector $\vx_i^+$ such that
    \[
    \frac{1}{|\H|}\sum_{i \in \H}\|\vx_i^+ - \overline{\vx}_{\H}\|^2 \le r \frac{1}{|\H|}\sum_{i \in \H}\|\vx_i - \overline{\vx}_{\H}\|^2.
    \]
\end{definition}

Imposing $r < 1$ means the honest nodes' parameters have to be strictly closer (on average) to the initial mean after the communication than before. It  thus requires that the reduction of the \textit{variance} of nodes parameters $\varh(\vx) -\varh(\vx^+) $ is larger than the \textit{bias} $\|\overline{\vx}_{\H}^+ - \overline{\vx}_{\H}\|^2$ introduced by the Byzantines, with  $\varh(\vx):=|\H|^{-1}\sum_{i \in \H}\|\vx_i - \overline{\vx}_{\H}\|^2$. Remark that the $r$-robustness of an algorithm on a graph $\G$ states that a \textit{single use} of the algorithm strictly reduces the average quadratic error. However, it does not mean that multiple uses would result in a geometric decrease; indeed, we cannot simply use induction as $\overline{\vx}_{\H}^+ \neq \overline{\vx}_{\H}$. %

\section{The Robust Gossip Framework}
\label{sec:robust_gossip_framework}

We now introduce a generic framework, which relies on two key building blocks: 
{(i) a generic update form, depending on an aggregation function $\rm{F}:(\R_+ \times \R^d)^n \rightarrow \R^d $, and 
(ii) a set of those aggregation functions $\rm{F}$, referred to as 
\textit{robust summation} functions}.
We then show that the combination of these two blocks, \emph{i.e.}, the generic update used with a robust summation function $\rm{F}$, leads to a robust decentralized algorithm, hence the name: Robust Gossip. 

We finally show that this framework leads to (near-)optimal robustness guarantees for well-chosen $\rm F$. 

\subsection{The Robust Gossip Method: adding robust differences instead of averaging robustly.}
We call \emph{aggregation rule} a function $\rm{F}:(\R_+ \times \R^d)^n \rightarrow \R^d$, meant to aggregate a set of vectors $(z_i)_{i\in [n] } \in (\R^d)^n$ with weights $(w_{i})_{i\in [n]} \in \R^n_+$. For $\eta > 0$  a communication step-size, the associated robust gossip algorithm (coined $\mathrm{F\text{--}RG}$) consists, for any honest node $i\in \H$ of the communication network $\G$, in updating its parameter $\vx_i$ using:
\begin{equation}
\label{eq:RG}
    \vx_i^+ := \vx_i - \eta F\left((w_{ij}, \vx_i-\vx_j)_{j\in n(i)}\right). \tag{$\mathrm{F\text{--}RG}$}
\end{equation}

Crucially, each node applies the (robust) aggregation rule~$\mathrm{F}$ to $(\omega_j, \vz_j)_{j \in n(i)} := (w_{ij}, \vx_i - \vx_j)_{j \in n(i)}$, i.e. to \emph{the differences of its parameter with those of its neighbors}, and uses this estimate to update its parameter. Thus, \Cref{eq:RG} recovers the standard gossip update from \Cref{eq:non_robust_gossip} if $\mathrm{F}$ is the weighted sum operator (which is unfortunately not robust). Hence, we will look for aggregation functions that are robust versions of the weighted sum.

In contrast, directly averaging \textit{the parameters} of the neighbors, even with a robust aggregation rule, would highly suffer from heterogeneity. Indeed,  this leads to a \emph{biased estimate} of the mean of initial parameters for sparse communication graphs. Extra assumptions, such as homogeneity of the local objectives, are thus needed to alleviate this problem~\citep{fang2022bridge}. %

Meanwhile, the $\RG$ method uses intrinsically decentralized updates, allowing for tight convergence guarantees in sparse communication graphs with heterogeneous local objectives. The strength of the $\RG$ framework is to turn any robust summation into a robust gossip algorithm. As such, one can focus on the design of robust aggregation functions without worrying about the decentralized aspect. 

\subsection{Robust Summation Functions.} 

Our analysis of $\FRG$ relies on aggregation rules that meet the following robustness conditions.

\begin{definition}[$(b,\rho)$--robust summation] Let $b, \rho \ge 0$. An aggregation rule $F:(\R_+ \times \R^d)^n \rightarrow \R^d$ is a \emph{$(b, \rho)$--robust summation} if, for any vectors $(\vz_i)_{i \in [n]}\in (\R^d)^n$, any weights $(\omega_i)_{i \in [n]} \in \R_+^n$ and any $S \subset [n]$ such that $\sum_{i \in \overline{S}} \omega_i \le b$ (where $\overline{S} := [n]\backslash S$),
\[
\bigg\|F\big((\omega_i, \vz_i)_{i \in [n]}\big) - \sum_{i\in S} \omega_i \vz_i\bigg\|^2 \le \rho b \sum_{i\in S} \omega_i \|\vz_i\|^2.
\]
\end{definition}
In (\ref{eq:RG}),  $S$ is  the set of honest neighbors $n_{\H}(i)$, while $\overline{S}$ is the set of Byzantine neighbors $n_{\B}(i)$.  We will exhibit several $(b,\rho)$--robust summation rules, including a practical rule for $\rho=2$, in \Cref{sec:example_rules}.

\begin{remark}
    \label{rq:diff_robustness_summation_definitions}
    The latter definition differs from \emph{$(f,\kappa)$--robustness} \citep{allouah2023fixing} we upper bound the error using the \emph{second moment} of vectors within $S$ \emph{instead of their variance}. Note also that $(f,\kappa)$--robustness is stated with constant weights only. Therefore, if $F$ is $(f,\kappa)$--robust, then $F$ is a $(b,\rho)$-robust summation with e.g. uniform weights $\omega_i = 1/(n-f)$ with $b=f/(n-f)$ and $\rho = \kappa/b$. 
\end{remark}

\subsection{Convergence of $\RG$ under $(b, \rho)$-robustness.}

As briefly discussed in the introduction, the goal of communicating is to reduce the variance, which comes at the price of bias. This is unavoidable since communicating allows nodes to inject wrong information which biases the system. 

In the following core result, we show how $(b,\rho)$--robustness enables us to tightly quantify how much a single step of $\FRG$ reduces the variance, and how much bias is injected. Recall that $\varh(\vx) = \frac{1}{|\H|}\sum_{i \in \H} \|\vx_i - \overline{\vx}_{\H}\|^2$ is the variance of honest nodes.

 \renewcommand*{\arraystretch}{1.5}
\begin{theorem}
\label{thm:consensus_tight}
    Let $F$ be a $(b,\rho)$--robust summation, $b$ and $\mumin$ be s.t. $2\rho b \leq \mumin$, and $\G \in \Gamma_{\mumin, b}$. Then, assuming $\eta\le \mumaxGh^{-1}$, the output $(\vx_i^+)_{i \in \H}$ of $\FRG$ verifies:
    \begin{equation*}
\left\{\begin{array}{l}
    \frac{1}{|\H|}\!\sum_{i \in \H} \!\|\vx_i^+\! -\! \overline{\vx}_{\H}\|^2 \!\le\! \left(1 \!-\! \eta\left(\mumin \!-\! 2\rho b\right) \right) \varh(\vx), \nonumber\\ 
    \|\overline{\vx}_{\H}^{\,+} - \overline{\vx}_{\H}\|^2 \le 2\rho b  \,\eta\,\varh(\vx). \nonumber
    \end{array}\right.
    \end{equation*}
    Thus, $\FRG$ is $(1 \!- \!\eta\left(\mumin - 2\rho b\right))$--robust for $\G \in \Gamma_{\mumin, b}$.%
\end{theorem}

 \renewcommand*{\arraystretch}{1}

While the bound on  $\eta$ depends on the honest subgraph, as $\mumaxGh \leq \mu_{\max}(\G)$,  $\eta$ can be set conservatively by evaluating $\mu_{\max}$ on the whole graph. Note that while $r$--robustness is guaranteed for the whole class $\Gamma_{\mumin, b}$, the value of $r$ will depend on the actual graph within the class.

\textbf{Chaining aggregation steps.} When low variance levels are required, it is necessary to perform several robust gossip steps one after the other. This contrasts with the centralized setting, in which the variance can be brought to zero in one step. %
While the variance reduces at a linear rate, the bias accumulates as multiple aggregation steps are performed. We provide bounds for $t$ steps of  $\FRG$  in the following Corollary.

\begin{corollary}
    \label{csq:characterization_bias}
    Let $F$ be a $(b,\rho)$--robust summation, let $b$ and $\mumin$ be such that $2\rho b \leq \mumin$, and let $\G \in \Gamma_{\mumin, b}$. We denote $\delta = \frac{2\rho b}{\mumin}$ and $\gamma = \mumin / \mumaxGh$. Then, for $(\vx^t_i)_{i \in \G, \: t\geq 0}$ obtained from any $(\vx^0_i)_{i \in \G}$ through $ (\vx^{t+1}_i)_{i \in \G} = \FRG\big((\vx^t)_{i \in \G}\big)$, with $\eta = \mumaxGh^{-1}$,
    \begin{equation*}
     \renewcommand*{\arraystretch}{1.5}
    \left\{\begin{array}{l}
\varh(\vx^t) \le \left(1 - \gamma (1 - \delta)\right)^t \varh(\vx^0), \nonumber\\
    \|\overline{\vx}_{\H}^{t} - \overline{\vx}_{\H}^{0}\| \le \frac{\sqrt{\gamma\delta}\left(1 - [1 - \gamma (1 -  \delta)]^{t/2}\right)}{1 -\sqrt{1 - \gamma (1 -  \delta)}}  \sqrt{ \varh(\vx^0)} \nonumber.
    \end{array}
        \right.
    \end{equation*}
     \renewcommand*{\arraystretch}{1}
    Consensus is thus reached, as $ \varh(\vx^t) \rightarrow_{t\rightarrow \infty} 0$   %
     , and 
    \begin{equation}\label{eq:accumulated_bias}
        \|\overline{\vx}_{\H}^{t} - \overline{\vx}_{\H}^{0}\|^2 \le \frac{4\delta}{\gamma (1 - \delta)^2} \varh(\vx^0).%
    \end{equation}
\end{corollary}

  While the total L2 error (bias plus variance) is guaranteed to decrease after a single step by \Cref{thm:consensus_tight},  it may increase if several $\FRG$ steps are performed because of bias accumulation. This happens when the factor multiplying the variance in~\Cref{eq:accumulated_bias} is larger than 1, which essentially means $\gamma \le \delta$.  Despite this bias, the output of the resulting robust aggregation procedure is (arbitrarily) close to consensus, which can be desirable. 
   Proofs of \Cref{thm:consensus_tight} and \Cref{csq:characterization_bias} are respectively given in~\Cref{app:robustness_analysis_proper,app:csq_gossip}.%

\textbf{Dependence on the parameters.} As expected, the bias increases with the amount of Byzantine corruption (through $\delta$) and decreases as the graph becomes more connected (i.e, $\gamma \rightarrow 1$). One can then use parameter $\eta$ (up to its maximum value) to control the bias-variance trade-off.

\subsection{Spectral limit of r-robust decentralized algorithms.}

\label{sec:lower_bound}

We now provide an \textit{upper bound} on the weight of Byzantine neighbors that can be tolerated by any algorithm running on a communication network in which the honest subgraph has a given algebraic connectivity.

\begin{theorem}  \label{thm:breakdown_point_graph}
    Let $\mumin \geq 0$, $b\geq 0$ be such that $\mumin \le 2b$. Then for any $h\geq 0$ and any algorithm $\mathrm{Alg}$, there exists a graph $\G\in \Gamma_{\mumin,b}$ in which all honest nodes have a weight of honest neighbors $h(i)$ larger than $h$, and such that for any $r <1$, $\mathrm{Alg}$ is not $r$--robust on $\G$. 
\end{theorem}

We refer the reader to \Cref{app:proof:lower_bound} for the proof details. It follows from \Cref{thm:breakdown_point_graph} that when a theoretical guarantee quantifies the robustness of an algorithm on a graph through $\mumin$,  we must have $2b < \mumin$. Importantly, this upper bound on the breakdown point is \textit{independent of the total weight of honest neighbors}. 

For a fully-connected graph with uniform weights $\omega$, we have $\mu_2(\G_{\H}) = \omega |\H|$ and $b(i) = \omega |\B|$. Then, the previous condition boils down to $|\H| > 2|\B|$, i.e. there is less than $1/3$ of Byzantine nodes, which recovers known necessary robustness conditions of distributed system \citep{lamport1982byzantine, vaidya2012iterative, el2021collaborative}. %

\textbf{Near-optimal breakdown point.} \Cref{thm:breakdown_point_graph} states that uniformly ensuring $r$--robustness on $\Gamma_{\mumin,b}$ is impossible as soon as $2b \geq \mumin$, and we know that update (\ref{eq:RG}) is $r$-robust as soon as $2\rho b < \mumin$. Therefore, no $(\rho,b)$-robust summand exists with $\rho < 1$, and (\ref{eq:RG}) achieves the optimal breakdown if a $(b, 1)$-robust aggregation rule is used. Such rules exist, as shown in \Cref{sec:example_rules}, but are unfortunately not practical, as they require prior knowledge on the Byzantine nodes. It is an open question whether $\rho=1$ can be achieved using a practical rule. 

Nevertheless, we propose a practical robust summation rule that  achieves $\rho = 2$, thus robust if $4b < \mumin$. This is a significant improvement over existing works. For example, in \citet{he2022byzantine}, the $4b < \mumin$ condition is essentially replaced by $cb \le \gamma \mumin$ (\emph{e.g.}, for regular graphs), where $c >0$ is a large constant, and $\gamma=\nicefrac{\mu_2(\G_{\H})}{\mumaxGh}$ the graph's spectral gap. This gap rapidly shrinks with the size and the lack of connectivity of the graph, making the condition orders of magnitude worse for large sparse graph. In \citet{wu2023byzantine}, the breakdown condition %
is $8b\sqrt{|\H|} \le \mu_{\min}$, which means that the robustness guarantee decreases when the number of honest nodes increases. For instance,  only  a $\nicefrac{1}{(9|\H|^{\nicefrac{1}{2}})}$ fraction of Byzantine nodes is tolerated for a fully-connected network, whereas we tolerate up to $\nicefrac{1}{5}$.

We conclude this section by two remarks on potential alternative characterizations of the breakdown point.

\begin{remark}[On algebraic connectivity]
    \Cref{thm:breakdown_point_graph} does not imply that a given communication algorithm systematically fails as soon as $2b \ge \mu_2(\G)$, but rather that since there exists a graph for which it is the case, one can not have an $r$--robust algorithm with an assumption based on $\mu_2(\G)$ and $b$ looser than $\mu_2(\G) \ge 2b$. Yet, one can still prove breakdown points using other graph-related quantities, which might lead to tolerating $b > \nicefrac{\mumin}{2}$ Byzantine nodes for specific graph architectures. This gap is standard in the decentralized optimization literature, where optimal algorithms depend on %
    the (square root of the) \textit{spectral gap} 
    of the gossip matrix \citep{scaman2017optimal,kovalev2020optimal}, whereas iteration lower bounds are proven in terms of diameter of the communication graph. %
\end{remark}

\begin{remark}[Dimension-dependent breakdown points] The Approximate Average Consensus problem (\cref{approximate_average_onsensus})
is related to the \textit{Approximate Consensus Problem} (ACP) \citep{dolev1986reaching}, in which the nodes must converge to the same value while \textit{remaining within the convex hull} of the initial parameters. This is a harder problem, since the ACP cannot be solved using iterative communication on a system of $m$ nodes with $f$ Byzantine failures in dimension $d$ if $m \le (d+2)f + 1$ \Citep{vaidya2014iterative}. This dependence on the dimension~$d$ is prohibitive for ML applications. On the contrary, our definition of $r$--robustness only requires the algorithm to improve the average squared distance to the target value, which is enough for D-SGD to converge, and enables us to prove \textit{dimension-independent} breakdown. Yet, it would be interesting to link their notion of $r$-robust networks~\citep{leblanc2013resilient} with algebraic connectivity. 
\end{remark}

\section{From the general framework to practical decentralized algorithms}
\label{sec:example_rules}
In this section, we first define robust summation rules  and link our general framework with existing decentralized robust algorithms in \Cref{subsec:expls}. Then we prove convergence for Decentralized-SGD based on $\FRG$, in \Cref{subsec:linkDSGD}. 

\subsection{Examples of $(b, \rho)$-robust rules}
\label{subsec:expls}
Several methods have been proved to be $(f,\kappa)$-robust, including the Coordinate-Wise Trimmed Mean (CWTM) \citep{yin2018byzantine}, the Coordinate-wise Median (CWM) \citep{yin2018byzantine}, the Geometric Median (GM) \citep{yin2018byzantine, pillutla2022robust} and Krum \citep{blanchard2017machine}.
It follows from~\Cref{rq:diff_robustness_summation_definitions} that they are also $(b,\rho)$--robust summation. 

However, since $(b,\rho)$--robust summation is weaker than $(f,\kappa)$--robustness, we can introduce robust aggregation methods using this new perspective with tighter robustness guarantees. The following introduced aggregator either recover existing algorithm, or are tighter than previous approach. In their definition, $(\omega_i,\vz_i)_{i \in [n]}\in (\R_+ \times \R^d)^n$ and $S \subset [n]$ such that $\sum_{\overline{S}}\omega_i \le b$.

\textbf{Geometric Trimmed Sum (GTS).} Assume w.l.o.g. that $(\|\vz_{i}\|)_{i \in [n]}$ are sorted, i.e. $\|\vz_{1}\|\le \ldots \le \|\vz_{n}\|$, and we denote as $k^*(b):= \max \{k \in [n]; \sum_{i \ge k} \omega_{i} \ge b\}$ the index of the largest vector which has at least a weight $b$ of vectors larger than it\footnote{When weights sum to $1$, $k^*$ is a quantile function.}. (GTS) computes $\tilde{\omega}_{k^*(b)} := \sum_{i \ge k^*(b)} \omega_{i} - b$, and outputs
\[
    \GTS\big((\omega_i, \vz_i)_{i \in [n]}\big) = \tilde{\omega}_{k^*(b)} \vz_{k^*} +  \sum_{i < k^*(b)} \omega_{i} \vz_{i}.
\]
In the simpler case where the weights are $1$ and $b \in [n]$, GTS consists in discarding the $b$ largest vectors within $(\vz_i)_{i \in [n]}$ and summing the rest.

\textbf{Clipped Sum (CS).} Given  a threshold function $\tau\! :\! (\R_+\times \R^d)^n \mapsto \R_+$, output the mean of clipped vectors:
\begin{equation*}
    \mathrm{CS}\big((\omega_i, \vz_i)_{i\in [n]}; \tau \big) \!:= \!\frac{1}{n}\!\sum_{i=1}^n \omega_i \clip\!\left(\vz_{i}; \tau\big((\omega_i, \vz_i)_{i\in [n]}\big)\right)\!, \\ 
\end{equation*}
\[\text{ where } \forall \vz \in \R^d, \tilde{\tau} \in \R_+, \clip(\vz;\tilde \tau) := \frac{\vz}{\|\vz\|}\min(\|\vz\|,\tilde \tau).\]

 We propose the following threshold function\footnote{Note that Clipped Sum cannot be a $(b,\rho)$--robust summation when the threshold function is a fixed constant $\tau \ge 0$: the clipping threshold must be adaptive to the input vectors.}, which leads to a practical and nearly optimal aggregator.

\textbf{Practical Clipping (\textbf{$\CSours$}).} Let $\CSours := \mathrm{CS}\big(\: \cdot \: ; \tau_{+}\big)$, where\vspace{-7pt}
    \[\tau_{+} \big( (\omega_i, \vz_i)_{i \in [n]}\big) := \max \Big\{ \tau \ge 0: \sum_{i =1}^n \omega_{i} \1_{\|\vz_i\| \ge \tau} \ge 2b \Big \}.\]
    
    This rule corresponds, in the specific case of unitary weights $\omega_i = 1$ and $b\in [n]$, to defining the clipping threshold as the $2b^{th}$ largest value within $\|\vz_1\|,\ldots,\|\vz_n\|$ (\emph{i.e.}, $\|z_{k^*(2b)}\|$). We now provide a robustness guarantee for these two aggregation rules in the following theorem.
\begin{theorem}
\label{prop:robustness_of_rules1}
    Let $b\ge 0$, then
        $\GTS$ is $(b,\rho)$--robust with $\rho =4$, and 
        $\CSours$ is $(b,\rho)$--robust with $\rho =2$.
\end{theorem}

\begin{remark}[Concurrent work]
    \citet{allouahadaptive2025} study the influence of an adaptive clipping scheme, named Adaptive Robust Clipping ($\rm ARC$), with the same adaptive clipping threshold as $\CSours$. However, they use it \emph{before} any aggregation function $(f,\kappa)$--robust $F$, and only analyze the robustness of $\mathrm{F} \circ \rm ARC$,
    making their approach orthogonal to ours. Moreover, their focus is on the federated case.
\end{remark}

Next, we define \emph{oracle} (or.) threshold functions. Those require the knowledge of the set $S$ to be computed, which eventually corresponds to being able to identify which node is honest and which node is Byzantine. This is obviously not a reasonable assumption in practice. Even though, it shows that the optimality gap is not inherent to clipping, since an oracle choice of threshold is optimal. Furthermore, the guarantees from \citet{he2022byzantine} rely on such assumptions\footnote{In their experiments, they propose a practical (i.e. non-oracle) threshold function that is not supported by theory.}.
 
\textbf{Oracle Clipping.} Let $\CSoursor := \mathrm{CS}\big(\: \cdot \: ; \tau_{\text{+}}^{\text{or.}}\big)$, where
    ${\tau_{\text{+}}^{\text{or.}} \big( (\omega_i, \vz_i)_{i \in [n]}; S\big) \!= \! \max \!\big\{ \tau \!\ge \! 0: \! \sum_{i \in S} \omega_{i} \1_{\|\vz_i\| \ge \tau} \!\ge\! b \big\}}$.
    
\textbf{Oracle clipping \citep{he2022byzantine}.} $\CSHe := \mathrm{CS}\big(\: \cdot \: ; \tau_{\text{He}}^{\text{or.}}\big)$, where $\tau_{\text He}^{\text{or.}} \big( (\omega_i, \vz_i)_{i \in [n]}; S\big) = \sqrt{\frac{1}{b}\sum_{i \in S} \omega_i \|\vz_i\|^2}$.

As shown below, $\CSoursor$ leads to an optimal breakdown point.  On the contrary, $\CSHe$ only achieves $\rho=4$, despite its oracle aspect. Proofs of \Cref{prop:robustness_of_rules2,prop:robustness_of_rules1} are given in \cref{app:robust_summation_rules}. 
\begin{theorem}
\label{prop:robustness_of_rules2}
    Let $b\ge 0$, then
 $\CSoursor$ is $(b,\rho)$--robust with $\rho =1$, and $\CSHe$ is $(b,\rho)$--robust with $\rho =4$. 
\end{theorem}

When instantiated in specific settings, our framework gives tight convergence guarantees (improving on the existing ones) for existing algorithms.  

\begin{proposition}
    $\mathrm{(}$\ref{eq:RG}$\mathrm{)}$ recovers existing algorithms.
    \begin{enumerate}[topsep=0in, noitemsep,leftmargin=*]
        \item $\GTS\text{--}\RG$, on a fully connected communication network $\G$ with constant weight, corresponds to Nearest Neighbors Averaging \citep[$\NNA$]{farhadkhani2022byzantine}.
        \item $\CSHe\text{--}\RG$ recovers $\ClippedGossip$ \citep{he2022byzantine}.
    \end{enumerate}
\end{proposition}

\subsection{Byzantine robust Distributed SGD on graphs}
\label{subsec:linkDSGD}

We now give convergence results for a D-SGD-type algorithm that uses $\mathrm{(}$\ref{eq:RG}$\mathrm{)}$ for robust decentralized aggregation. Several works on Byzantine-robust SGD abstract away the aggregation procedure by relying on contraction properties~\citep{karimireddy2021learning, wu2023byzantine, farhadkhani2023robust}, so that global D-SGD convergence follows from the robustness of the averaging procedure. Our \Cref{corr:DSGD_conv} builds on the $(\alpha, \lambda)$-reduction from~\citet{farhadkhani2023robust}:
\begin{definition}[$(\alpha,\lambda)$-reduction]
    A coordinating phase $\Psi$ verifies an $(\alpha,\lambda)$-reduction if, from any initial local parameters $(\vx_i)_{i \in \H} \in (\R^d)^{\H}$, each honest nodes $i \in \H$ obtain a parameter vector $\vx_i^+$ upon the completion of $\Psi$ such that
    \begin{align*}
        \frac{1}{|\H|}\sum_{i \in \H}\|\vx_i^+ - \overline{\vx}_{\H}^+\|^2 \le \alpha \frac{1}{|\H|}\sum_{i \in \H}\|\vx_i - \overline{\vx}_{\H}\|^2,\\
        \|\overline{\vx}_{\H}^+ - \overline{\vx}_{\H}\|^2 \le \lambda \frac{1}{|\H|}\sum_{i \in \H}\|\vx_i - \overline{\vx}_{\H}\|^2.
    \end{align*}
\end{definition}

\begin{remark}
    $(\alpha, \lambda)$-reduction and $r$-robustness are closely related quantities since $(\alpha, \lambda)$ reductions implies $r$-robustness with $r \le \alpha + \lambda$, and $r$-robustness implies $\alpha, \lambda \le r$. Yet, $r$-robustness explicitly requires that $r<1$, unlike $(\alpha, \lambda)$-reduction. In essence, $r$-robustness expresses more precisely that \textit{nodes benefit from the communication}.
\end{remark}

The $(\alpha, \lambda)$ requirements on the aggregation procedure exactly match the guarantees of~\Cref{thm:consensus_tight}: using one single step of $\FRG$ as a coordination phase leads to $\alpha = 1 - \gamma(1-\delta)$ and $\lambda = \gamma \delta$, and using multiple communication steps leads to $\alpha \approx 0$ and $\lambda = \nicefrac{4\delta}{\gamma(1-\delta)^2}.$ We build on this abstraction to propose a Byzantine robust decentralized stochastic gradient descent framework.

We consider Problem~\ref{eq:finite_sum_on_honest}, where we assume that each local function $f_i$ is a risk computed using a loss $\ell$ on a data distribution $\D_i$, i.e $f_i(\vx) = \E_{\vxi \sim \D_i}[\nabla \ell(\vx, \vxi)]$. We solve Problem \ref{eq:finite_sum_on_honest} using D-SGD over a communication network $\G$. Robustness to Byzantine nodes is obtained using $\mathrm{(}$\ref{eq:RG}$\mathrm{)}$ as the aggregation rule, coupled with Polyak momentum used as a moving average to reduce the stochastic noise.

\begin{algorithm}[h]
   \caption{Byzantine-Resilient D-SGD with $\FRG$}
   \label{alg:DSGD_Byz_robust}
\begin{algorithmic}
   \STATE {\bfseries Input:} Initial model $\vx_i^0 \in \R^d$, local loss functions $f_i$, initial momentum $m_i^0 =0$, momentum coefficient $\beta=0$, learning rate $\eta_{op}$, communication step size $\eta$, communication graph $\G$, upper bound on Byzantine weight $b$.
   \FOR{$t=0$ {\bfseries to} $T$}
   \FOR{$i \in \H$ \textbf{in parallel}}
   \STATE Sample a noisy gradient: $\vg_i^t = \nabla f_i(\vx_i^t) + \vxi_{i}^t.$
   \STATE Update the momentum: $\vm_i^{t} = \beta \vm_i^{t-1} + (1 - \beta)\vg_i^t$.
   \STATE Optimization step: $\vx_i^{t+\nicefrac{1}{2}} = \vx_i^t - \eta_{op} \vm_i^t$.
   \STATE Send $\vx_i^{t+\nicefrac{1}{2}}$ to the neighbors $n(i)$. 
   \STATE Update the model with $\FRG$ \\\(\quad \vx^{t+1}_i =  \FRG\left(\vx_i^{t+\nicefrac{1}{2}}; \{\vx_j^{t+\nicefrac{1}{2}}; \: j\in n(i) \}\right). \) 
   \ENDFOR
   \ENDFOR
\end{algorithmic}
\end{algorithm}

The convergence results of this algorithm rely on the following standard assumptions.
\begin{assumption}
    \label{asmpt:smoothness_bounded_noise_heterogeneity}
    Objective functions regularity.
    \begin{enumerate}[topsep=0pt,itemsep=0pt]
        \item \textbf{(Smoothness)} There exists $L \ge 0$, s.t.
        $\forall \vx,\vy \in\R^d$, $\|\nabla f_i(\vx) - \nabla f_i(\vy)\|\le L\|\vx - \vy\|$.
        \item \textbf{(Bounded noise)} There exists $\sigma \ge 0$ s.t. $\forall \vx \in \R^d$, $ \ \E[\|\nabla \ell(\vx, \xi_i) - \nabla f_i(\vx)\|^2] \le \sigma^2$, for all $i\in [\H]$.
        \item \textbf{(Heterogeneity)} There exist $\zeta \ge 0$ s.t. $\forall \vx \in \R^d$, $ \ \frac{1}{\H}\sum_{i \in \H} \|\nabla f_i(\vx) - \nabla f_{\H}(\vx)\|^2 \le \zeta^2$.
    \end{enumerate}
\end{assumption}

We can now state the guarantees of \cref{alg:DSGD_Byz_robust}.

\begin{corollary}
\label{corr:DSGD_conv}
    {Let $F$ a $(b,\rho)$--robust summation, let ${b \!\ge\! 0}$ and let $\G$ a weighted graph such that ${\delta =2\rho /\mu_2(\G_{\H}) < 1}$ and of spectral gap $\gamma =\mu_2(\G_{\H})/\mumaxGh$. Under \Cref{asmpt:smoothness_bounded_noise_heterogeneity}, for all $i \in \H$, the iterates produced by \Cref{alg:DSGD_Byz_robust} on $\G$ with $\eta=1/\mu_{\max}(\G)$
    and learning rate $\eta_{op} = \mathcal{O}(1/\sqrt{T})$ (depending also on problem parameters such as $L$, $\gamma$ or $\delta$), verify as $T$ increases:}
\begin{align*}
   \frac{1}{T}\!\sum_{t=1}^T \E\!\left[\left\| \nabla f_{\H}(\vx_i^t) \right\|^2\right] & \!=\! \mathcal{O}\!\left(\frac{L\sigma}{\gamma(1 - \delta)\sqrt{T}} + \frac{\zeta^2}{\gamma^2(1 - \delta)^2}\right)\\
\varh(\vx^T) & = \mathcal{O}\left(\frac{1}{T}\left(1 + \frac{\zeta^2}{\sigma^2}\right)\!\right)\!.
\end{align*}
    Performing $\tilde{\mathcal{O}}(\gamma^{-1}(1 - \delta)^{-1})$ steps of $\FRG$ between each gradient computation leads to:
    \begin{align*}  
    \frac{1}{T}\sum_{t=1}^T \E\left[\left\| \nabla f_{\H}(\vx_i^t) \right\|^2\right]  = \mathcal{O}\left(\frac{L\sigma}{\sqrt{T}}\sqrt{\frac{\delta}{\gamma(1 - \delta)^2}} + \frac{\delta \zeta^2}{\gamma(1 - \delta)}\right) . 
    \end{align*} 
\end{corollary}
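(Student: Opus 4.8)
The plan is to treat \Cref{corr:DSGD_conv} as a consequence of the black-box reduction of \citet{farhadkhani2023robust}, feeding their analysis the one-step contraction and bias guarantees of \Cref{thm:consensus_tight}. Their reduction abstracts away the communication scheme by requiring only that a single aggregation step reduce the inter-node variance by a multiplicative factor $\alpha$ while injecting a bias bounded by a multiple $\lambda$ of that same variance; these are exactly \Cref{eq:thm_alpha_reduction} and \Cref{eq:thm_lambda_reduction}. So the first step is to read off the effective constants at $\eta = \mumaxGh^{-1}$: a short computation gives the contraction factor $\alpha = 1 - \gamma(1-\delta)$ and the bias coefficient $\lambda = 2\eta(b+1) = \gamma\delta$, with $\gamma = \mumin/\mumaxGh$ and $\delta = 2(b+1)/\mumin$ as in \Cref{csq:characterization_bias}. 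Verifying that $\CGours$ is an admissible oracle for the reduction then amounts to checking that these match the hypotheses of \citet{farhadkhani2023robust}, which they do by construction.

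Second, I would combine this aggregation guarantee with the standard descent analysis for $L$-smooth non-convex objectives under \Cref{asmpt:smoothness_bounded_noise_heterogeneity}. The idea is to track a Lyapunov potential combining the suboptimality of $f_{\H}$ at the honest average $\overline{\vx}_{\H}^t$, the consensus error $\varh(\vx^t)$, and the momentum variance; the momentum step dampens the stochastic noise $\sigma$, while the heterogeneity $\zeta$ feeds into the residual bias because genuinely different local gradients cannot be perfectly reconciled. One subtlety, already flagged after \Cref{thm:consensus_tight}, is that \Cref{eq:thm_alpha_reduction} is not directly chainable since it is stated relative to $\overline{\vx}_{\H}$ rather than $\overline{\vy}_{\H}$; one must pass through $\varh(\vy) \le \frac{1}{|\H|}\sum_{i \in \H}\|\vy_i - \overline{\vx}_{\H}\|^2$ before iterating. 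Summing the resulting recursion over $t$ and tuning $\rho = \mathcal{O}(1/\sqrt{T})$ in terms of $L$, $\gamma$ and $\delta$ produces the first displayed bound, with the vanishing $\sigma$-term scaling as $1/(\gamma(1-\delta)\sqrt{T})$ and the non-vanishing heterogeneity floor as $\zeta^2/(\gamma^2(1-\delta)^2)$.

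Third, for the sharper second bound, the plan is to replace each single aggregation step by $\tilde{\mathcal{O}}(\gamma^{-1}(1-\delta)^{-1})$ consecutive $\CGours$ steps between gradient computations and to invoke \Cref{csq:characterization_bias} rather than \Cref{thm:consensus_tight}. Since the variance contracts geometrically at rate $1 - \gamma(1-\delta)$, after that many inner steps the honest nodes are driven arbitrarily close to consensus, so the effective per-round contraction becomes a constant, while the total drift of $\overline{\vx}_{\H}$ stays controlled by the limiting accumulated bias $\tfrac{4\delta}{\gamma(1-\delta)^2}\,\varh(\vx^0)$ from \Cref{eq:accumulated_bias}. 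Re-feeding these improved effective constants into the same descent recursion trades the $1/\gamma$ factors of the first bound for the $\delta$-dependent factors and restores the $\sqrt{1/|\H|}$ averaging gain in the $\sigma$-term.

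The step I expect to be the main obstacle is the careful bookkeeping of how the aggregation bias interacts with the outer SGD recursion, and in particular calibrating the number of inner $\CGours$ steps. The bias does not vanish and accumulates across inner iterations, so one has to show that $\tilde{\mathcal{O}}(\gamma^{-1}(1-\delta)^{-1})$ steps is simultaneously enough to reap the consensus/variance-reduction benefit and few enough that the accumulated bias of \Cref{eq:accumulated_bias} does not overwhelm the descent — this balance is precisely what pins down the stated step count and the final dependence on $\delta$, $\gamma$ and $|\H|$.
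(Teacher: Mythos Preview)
Your proposal is correct and follows essentially the same route as the paper: identify the $(\alpha,\lambda)$-reduction constants $\alpha = 1-\gamma(1-\delta)$ and $\lambda = \gamma\delta$ from \Cref{thm:consensus_tight}, plug them into the black-box D-SGD guarantee of \citet{farhadkhani2023robust}, and for the second bound swap in the accumulated-bias constant from \Cref{csq:characterization_bias} after $\tilde{\mathcal{O}}(\gamma^{-1}(1-\delta)^{-1})$ inner steps so that the effective $\alpha$ becomes negligible. The paper's proof is slightly terser in that it directly invokes their Theorem~1 rather than sketching the underlying Lyapunov recursion, but the substance is the same.
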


It follows that those guarantees state that performing more aggregation steps between gradient computations improves the asymptotic error but under an additional communication cost. 

This corollary is the consequence of the combination of our \Cref{thm:consensus_tight} with Theorem 1 of \citet[]{farhadkhani2023robust}. The result is simplified using $\delta \ge |\H|^{-1}$, and $\gamma \ll 1$. We refer the reader to \cref{app:proofs_DSGD} for a more precise result and a detailed proof.

\begin{figure*}
\includegraphics[width=0.99\textwidth]{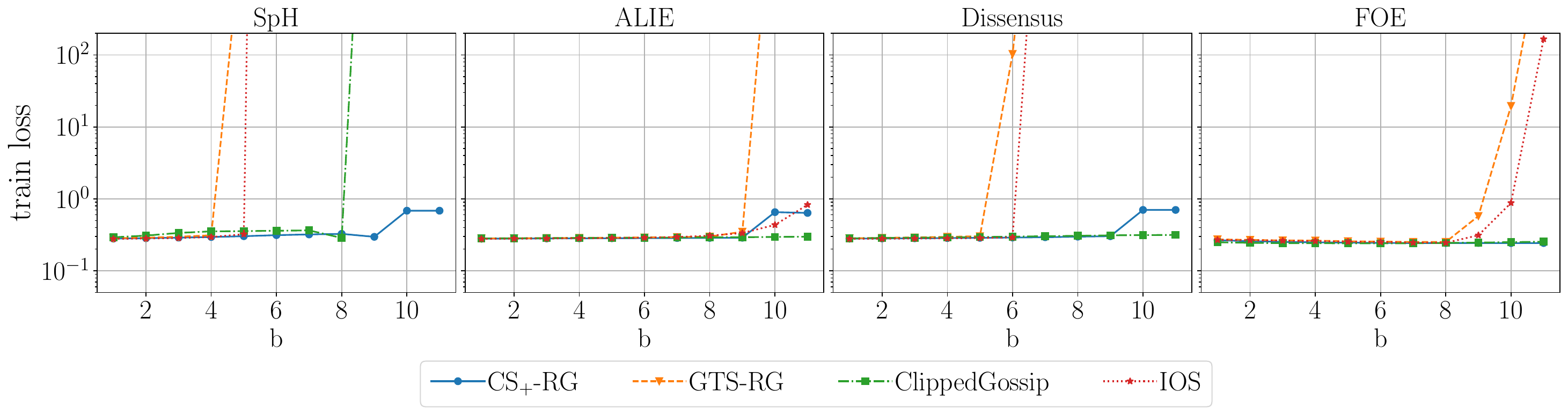}
\vspace{-0.15in}
\caption{Training loss achieved by $\genRG{\GTS}$, $\genRG{\CSours}$, ClippedGossip and IOS on MNIST ($\alpha=1$) after $300$ optimization and communication steps. The honest subgraph graph is $\G_{\H}:=[\G_{m=13,k=8,c=1}]_{\H}$, as defined in \cref{app:proof:lower_bound}. Thus, $\mu_2(\G_{\H})=16$.}
\vspace{-0.1in}
\label{plot:loss_extensive_minist_alpha=5}
\end{figure*}

\section{Attacking robust gossip algorithms}
\label{sec:attacks}
 In this section, we design an attack that aims to disrupt robust gossip algorithms. To do this, we model communications as perturbations of a gossip scheme, and analyze their impact on the variance among nodes, which allows us to deduce what perturbation Byzantines nodes should enforce for effective attacks. 
 Recall that $\mX_{\H}^t = (\vx_1^t,\ldots, \vx_{|\H|}^t)^T \in \R^{|\H|\times d}$ denotes the matrix of honest parameters at communication round $t$. Each step of $\FRG$ can be decomposed as a perturbed gossip update (cf. \cref{app:lemma:gossip-error_decomposition_matrix}), 
\begin{equation}
\label{eq:robust_gossip_error_decomposition}
\mX_{\H}^{t+1} = (\mI_{\H} - \eta \mW_{\H})\mX_{\H}^{t} + \eta \mE^t.
\end{equation}
Where $\mE^t$ is the perturbation term due to Byzantine nodes. In the following, we assume that $[\mE^t]_i = \zeta_i^t \va_i^t$ for any honest node $i$, where  $\va_i^t$ is the direction of attack on node $i$, and $\zeta_i^t$ is a scaling factor, which is chosen to bypass the defenses. Typically, if $\zeta_i^t$ is small, a Byzantine node $j \in n_{\B}(i)$ can declare to node $i$ the parameter $\vx_j^t = \vx_i^t + \zeta_i^t \va_i^t$.

\textbf{Dissensus Attack.} Byzantine nodes can disrupt decentralized communication by maximizing the variance of the honest parameters. A natural decentralized notion of variance is the \emph{Laplacian heterogeneity} $\sum_{i,j\in \H}w_{ij}\|\vx_i-\vx_j\|^2$, which corresponds to $\|\mX_{\H}\|_{\mW_{\H}}^2$. Finding $\va_i^t$ such that this heterogeneity is maximized at $t+1$ writes
\begin{align*}
\argmax_{[\mE^t]_i = \zeta_i^t \va_i^t} &\|(\mI_{\H} - \eta \mW_{\H})\mX_{\H}^{t} + \eta \mE^t\|^2_{\mW_{\H}} \\
&=\argmax_{[\mE^t]_i= \zeta_i^t \va_i^t}  2 \eta \langle \mW_{\H} \mX_{\H}^t, \mE^t \rangle + o(\eta^2) .
\end{align*}
For small $\eta$, this suggests to take $\va_i^t = [\mW_{\H}^t\mX_{\H}^t]_i = \sum_{j \in n_{\H}(i)}w_{ij}(\vx_i^t - \vx_j^t)$. This choice of $\va_i^t$ corresponds to the \textit{Dissensus} attack proposed in \citet{he2022byzantine}. However, as gossip communication is usually operated with multiple communication rounds, maximizing only the pairwise differences at the next step is a short-sighted approach. 

\textbf{Spectral Heterogeneity Attack.} Byzantine nodes can take into account the fact that several rounds of communication will occur, and focus on increasing the heterogeneity over the long term. This leads, at any time $t$, to maximizing for any $s \ge 0$ the pairwise differences at time $t+s$, i.e, finding
\begin{align*}
\argmax_{[\mE^t]_i = \zeta_i^t \va_i^t} 2 \eta \langle \mW_{\H}(\mI_{\H} - \eta \mW_{\H})^{2s+1} \mX_{\H}^t, \mE^t \rangle + o(\eta^2) .
\end{align*}
Taking $s\rightarrow +\infty$ leads to approximating $\mW_{\H}(\mI_{\H} - \eta \mW_{\H})^{2s}$ as a projection on its eigenspace associated with the largest eigenvalue of $\mW_{\H}(\mI_{\H} - \eta \mW_{\H})^{2s}$. This eigenspace corresponds to the space spanned by the eigenvector of $\mW_{\H}$ associated with the smallest non-zero eigenvalue of $\mW_{\H}$, i.e $\mu_2(\G_{\H})$. This eigenvector (denoted $\ve_{fied}$) is commonly referred to as the \emph{Fiedler vector} of the graph. Its coordinates essentially sort the nodes of the graph with the two farthest nodes associated with the largest and smallest value. Hence, the signs of the values in the Fiedler vector are typically used to partition the graph into two (least-connected) components. Our \emph{Spectral Heterogeneity} attack consists in taking $\va_i^t = [\ve_{fied}\ve_{fied}^T \mX_{\H}^t]_i$, which leads Byzantine nodes to cut the graph into two by pushing honest nodes in either plus or minus $\ve_{fied}^T \mX_{\H}^t$.

\section{Experimental evaluation.}

\label{sec:experiments}
We follow~\citet{farhadkhani2023robust} (on which the core of our code is based), and present results for classification tasks on MNIST and CIFAR-10 datasets, as well as plain averaging tasks. We refer to \cref{app:experiments} for most of the experiments. Similarly to \citet{farhadkhani2023robust}, heterogeneity is simulated by sampling data from each class using a Dirichlet distribution of parameter $\alpha$. We test the attacks Spectral Heterogeneity (SpH), Dissensus, A Little Is Enough (ALIE) \citep{baruch2019little}, and Fall of Empire (FOE) \citep{xie2020fall}. The main differences with \citet{farhadkhani2023robust} are the following
\begin{enumerate}[leftmargin=*,noitemsep,topsep=0pt,label=(\roman*)]
    \item  We consider sparse communication networks.
    \item We implement $\genRG{\GTS}$ instead of $\NNA$, and ClippedGossip (aka $\genRG{\CSHeada}$) is implemented the adaptive rule of clipping of \citep{he2022byzantine} instead of a fixed threshold. We additionally implement IOS \citep{wu2023byzantine}.
    \item We add Dissensus and Spectral Heterogeneity attacks. 
    \item We modify the generic design of attacks to adapt it really to the decentralized setting.
\end{enumerate}
    See \cref{appsec:appdesc} for a detailed experimental setup and our implementation available at \url{https://github.com/renaudgaucher/Byzantine-Robust-Gossip}.
    
    In \cref{plot:loss_extensive_minist_alpha=5}, it appears that the SpH attack is more efficient in disrupting  ClippedGossip, $\genRG{\GTS}$ and IOS than Dissensus and ALIE, and that $\genRG{\CSours}$ is highly resilient in the setup considered. In this setting with a rather simple learning task, the connectivity of the graphs appears as the major limiting factor to the robustness of distributed algorithms, hence why Spectral Heterogeneity is very efficient.

\vspace{-0.5em}
\section{Conclusion}
This paper revisits robust averaging over sparse communication graphs. We introduce a general framework for robust decentralized averaging, which allows us to derive tight convergence guarantees for many robust summation rules. In particular, we introduce one that nearly matches an upper bound on the breakdown point, \emph{i.e.}, the maximum number of Byzantine nodes an algorithm can tolerate. Our experiments confirm that our theory correctly sorts the breakdown points of the existing methods, and that some (such as $\NNA$) fail before the optimal breakdown point. We introduce a new \emph{Spectral Heterogeneity} attack that exploits the graph topology for sparse graphs to obtain this result. An interesting future direction is the characterization of robustness when the constraint on the number of neighbors cannot be met globally, but convergence can be obtained within local neighborhoods. Conversely, this opens up questions on which nodes an attacker should corrupt to maximize their influence for a specific graph, in light of our results.

\section*{Impact Statement}
This paper presents work whose goal is to advance the field of Machine Learning. There are many potential societal consequences of our work, none which we feel must be specifically highlighted here.

\section*{Acknowledgments}
The work of Aymeric Dieuleveut and Renaud Gaucher was supported by French State aid managed by the Agence Nationale de la Recherche (ANR) under France 2030 program with the reference ANR-23-PEIA-005 (REDEEM project). The work of Aymeric Dieuleveut was also supported by Hi!Paris - FLAG chair.

\bibliography{0_biblio}
\bibliographystyle{icml2025}

\newpage
\appendix
\onecolumn

\section{Additional Discussion}

\subsection{Asynchronous communications}

In the core of this paper, we assumed that all communications were synchronous. However, our $\FRG$ framework can be readily adapted to operate in less synchronous settings.

Suppose that communication still occurs in rounds, but messages take a variable amount of time to be delivered. If each honest node waits to receive messages from all its neighbors before performing an aggregation step, then Byzantine nodes can prevent the honest nodes from updating simply by withholding their messages. To be robust against such behavior, the nodes should not wait for all messages before updating their parameters.

The $\FRG$ framework adapts naturally to this setting. Specifically, consider a $(\rho, b)$-robust summand $F$, and assume that each honest node $i$ performs the $\FRG$ update as soon as it has received all messages except those corresponding to a total weight of at most $b$. Then, this asynchronous version of $\FRG$ remains $(1 - \eta(\mumin - 4\rho b))$-robust, as the following proposition demonstrates.

\begin{proposition}
        Let $F: (\R_+ \times \R^{d})^n \rightarrow \R^d$ be a $(b,\rho)$--robust summation. Let $F_{asyn.}$ denote the rule that applies $F$ to all inputs $(w_i,\vx_i)_{i \in [n]}$ \textit{excluding an arbitrary subset} $S_{delayed}\subset [n]$ of smaller than $b$, i.e. $\sum_{i\in S_{delayed}}w_i \le b$. Then $F_{asyn.}$ is a $(b,2\rho)$--robust summation rule.
\end{proposition}

\begin{proof}
Using the triangle inequality, $(a+b)^2 \le 2(a^2 + b^2)$ and Jensen inequality and the definition of robust summation yields
\begin{align*}
\bigg\|F_{asyn.}\big((\omega_i, \vz_i)_{i \in [n]}\big) - \sum_{i\in S} \omega_i \vz_i\bigg\|^2 &= \bigg\|F\big((\omega_i, \vz_i)_{i \in [n]\backslash S_{delayed}}\big) - \sum_{i \in [n]\backslash S_{delayed}} \omega_i \vz_i - \sum_{i \in S \cap S_{delayed}} \omega_i \vz_i\bigg\|^2\\
&\le \left(\bigg\|F\big((\omega_i, \vz_i)_{i \in [n]\backslash S_{delayed}}\big) - \sum_{i \in [n]\backslash S_{delayed}} \omega_i \vz_i \bigg\| + \bigg\| \sum_{i \in S \cap S_{delayed}} \omega_i \vz_i\bigg\|\right)^2\\
&\le 2\left(\bigg\|F\big((\omega_i, \vz_i)_{i \in [n]\backslash S_{delayed}}\big) - \sum_{i \in [n]\backslash S_{delayed}} \omega_i \vz_i \bigg\|^2 + \bigg\| b\sum_{i \in S \cap S_{delayed}} \frac{\omega_i}{b} \vz_i\bigg\|^2\right)\\
&\le 2\left( \rho b \sum_{i \in S \backslash S_{delayed}} \omega_i \| \vz_i\|^2 + b\sum_{i \in S \cap S_{delayed}} \omega_i \|\vz_i\|^2\right).
\end{align*}

Now, since necessarily $\rho \ge 1$, it finally yields
\[
\bigg\|F_{asyn.}\big((\omega_i, \vz_i)_{i \in [n]}\big) - \sum_{i\in S} \omega_i \vz_i\bigg\|^2 \le 2\rho b \sum_{i \in S}\omega_i \|\vz_i\|^2.
\]
\end{proof}

\begin{remark}
    In the above result, we used the fact that there exists no $(b, \rho)$-robust summand with $\rho < 1$, as shown in \cref{thm:breakdown_point_graph}.
\end{remark}

\begin{remark}
    The latter proof can be refined using $(a+b)^2 \le (1+\eps)a^2 + (1+ \eps^{-1})b^2$ with an optimal choice of $\eps$, showing that $F_{asyn.}$ is at least $(b,\rho + \sqrt{\rho}))$-robust.
\end{remark}

\subsection{Choosing the graph's weights}

Our analysis requires that each edge of the graph be associated with a nonnegative weight and that the communication step size satisfies $\eta \leq 1/\mu_{\max}(\G_{\H})$. Although unitary weights $w_{ij} = 1$ are convenient for identifying $b$ as an upper bound on the \textit{number of Byzantine neighbors}, they require adjusting the step size $\eta$ using global information from the graph, such as $\mu_{\max}(\G)^{-1}$. A practical way to circumvent this is to use bistochastic weights.

\begin{itemize}
    \item \textbf{Generic Bistochastic Matrix.} Any bistochastic matrix $B \in [0,1]^{m\times m}$ can be used to define the weights of the graph using $w_{ij}=B_{ij}$. In this case, the Laplacian matrix is defined as $W = I - B$, and its largest eigenvalue is upper-bounded by $2$. The communication step size can thus be chosen as $\eta = 1/2$. Note that, in such a case, the condition $\mu_{2}(\G_{\H}) \ge 2\rho b$ is implied by $\tilde{\gamma} \ge 2\rho b$, where $\tilde{\gamma}$ is generally named the \textit{spectral gap} of the bistochastic matrix $B$, and is defined as $\tilde{\gamma} = 1 - \max_{\mu \in sp(B), \mu \neq 1}|\mu|$, where $sp(B)$ denotes the eigenvalues of the matrix $B$. 
    
    \item \textbf{Metropolis-Hasting Weights \citep{hastings1970monte}}. The Metropolis-Hasting algorithm constructs a bistochastic matrix by making each node $i$ declare to their neighbors their degree $d_i$. Then, any pair of neighbors $(i,j)\in \E$ defines the weight on their edge as $w_{ij} = \nicefrac{1}{\max{(d_i, d_j)}+1}$. 
    As pointed out in \citet{he2022byzantine} this algorithm is robust to corrupted nodes, since for $i\in \H$ and $j \in B$ the influence of $j$ on $i$ is bounded by $w_{ij}\le \frac{1}{d_i + 1}$. Interestingly, since the size of the communication step can be chosen as $\eta=1/2$ without further knowledge of the global network properties, this choice of weights requires only local information to carry out the communication.
\end{itemize}

\section{Experiments}
\label{appsec:appdesc}

\subsection{Detailed Experimental Setup}

\subsubsection{Attack Design}

Our experimental setting is built on top of the code provided by \citet{farhadkhani2023robust}, with the following differences:
\begin{enumerate}
\item Each honest node receives different messages from Byzantine nodes: for an honest node $i \in \H$, the Byzantine node $j \in n_{\B}(i)$ declares to node $i$ at time $t$ the vector  $\vx_j^t = \vx_i^t + \zeta_i^t \va_i^t$. The reference point taken is the parameter of node $i$, instead of the average of all parameters $\overline{\vx}_{\H}^t$, as performed in \citep{farhadkhani2022byzantine}. Indeed, $\overline{\vx}_{\H}^t$ can be very far from the vectors in the honest neighborhood of node $i$ since the network is not fully connected. Note that in opposition to \citep{farhadkhani2022byzantine}, Byzantines declare \textit{different} parameters to each of the honest nodes. Not only does it allow the use of attacks such as Dissensus and spectral heterogeneity (though the choice of $\va_i^t$), but it also allows to tune $\zeta_i^t$ differently for each node.
\item Each scaling parameter $\zeta_i^t$ is designed separately through a linear search, such as to maximize for each honest node $i$
\[
\left\|F\big((w_{ij}, \vx_i - \vx_j)_{j \in n(i)}\big) - \sum_{j\in n_{\H}(j)} w_{ij} (\vx_i - \vx_j)\right\|^2.
\]
\item The vector $\va_i^t$ is defined differently depending on the attack implemented: \textit{Dissensus}, \textit{Spectral Heterogeneity} (SpH), \textit{Fall of Empire} (FOE) from \citet{xie2020fall} or \textit{A little is enough} (ALIE) from \citet{baruch2019little}. 
    \begin{itemize}
        \item \textbf{Dissensus}. The Byzantines $j\in n_{\H}(i)$ take as attack vector $\va_i^t = [\mW_{\H}^t\mX_{\H}^t]_i = \sum_{j \in n_{\H}(i)}w_{ij}(\vx_i^t - \vx_j^t)$.
        \item \textbf{Spectral Heterogeneity}. The Byzantines $j\in n_{\H}(i)$ take as attack vector $\va_i^t = [\ve_{fied}\ve_{fied}^T \mX_{\H}^t]_i$, where $\ve_{fied}$ denotes an eigenvector of $\mW_{\H}$ associated with $\mu_2(\mW_{\H})$.
        \item \textbf{ALIE}. The Byzantine nodes compute the mean of the honest parameters $\overline{\vx}^t_{\H}$ and the coordinate-wise standard deviation ${\bm{\sigma}}^t$. Then they use the attack vector $\va^t_i = {\bm{\sigma}}^t$. 
        \item \textbf{FOE}. The Byzantine nodes uses $\va_i^t = - \overline{\vx}_{\H}^t$.
    \end{itemize}
\end{enumerate} 

\begin{remark}
    In the case of trimming base rules, a badly designed attack leads Byzantine messages to be removed during aggregations, which induces the resulting algorithm to behave as a plain non-corrupted D-SGD algorithm. Thus, proposing non-over-confident experimental proofs of trimming-based aggregation requires a fine design of the attacks.
\end{remark}

\subsubsection{Algorithms tested}

\textbf{Networks}. Two topologies of the honest subgraph are investigated: 1) A "Two Worlds" graph, i.e. $\G_{\H}:=[\G_{m=13,k=8,c=1}]_{\H}$. 2) Randomly sampled Erdos-Renyi graphs, with a varying probability of edge presence $p$. All graphs are equipped with unitary weights on the edges, for simplicity. 

\textbf{Communications.} We compare 
\begin{itemize}[noitemsep, topsep=0in]
    \item $\GTS$-$\RG$;
    \item $\CSours$-$\RG$;
    \item The version of $\mathrm{ClippedGossip}$ proposed in \citet{he2022byzantine} with an adaptive clipping rule which is not supported by any theory. Precisely, $\mathrm{ClippedGossip}$ is equivalent to $\CSHeada$-RG, with $\CSHeada := \mathrm{CS}\big(\: \cdot \: ; \tau_{\text{He}}\big)$, where, for $\|\vz_1\| \le \ldots \le \|\vz_{n}\|$, $\tau_{\text He} \big( (\omega_i, \vz_i)_{i \in [n]}\big) = \sqrt{\frac{1}{b}\sum_{i \le |n_{\H}(i)|} \omega_i \|\vz_i\|^2}$,
    \item Iterative Outlier scissors (IOS), from \citep{wu2023byzantine}.
\end{itemize} 
F-RG based methods uses $\eta = \mumaxGh^{-1}$.

\textbf{Optimization.} All learning experiments implement \cref{alg:DSGD_Byz_robust}, and only the communication part changes among experiments. It follows that, even though IOS is not explicitly combined with momentum in \citep{wu2023byzantine}, we still implement it with momentum. We do this to have a fairer comparison between communication routines since \citep{farhadkhani2023robust} showed that momentum is key for robustness.

\subsubsection{Dataset Pre-Processing}

MNIST images receive an input image normalization of mean $0.1307$ and standard deviation $0.3081$. The images of CIFAR-10 are horizontally flipped, and a per-channel normalization is applied with means $(0.4914, 0.4822, 0.4465)$, and standard deviation $(0.2023, 0.1994, 0.2010)$.

\subsubsection{Data Heterogeneity}

We simulate data heterogeneity in the correct nodes' datasets following the method of \citep{farhadkhani2023robust} by making nodes sample from each class of the considered dataset (MNIST or CIFAR-19) using a Dirichlet distribution of parameter $\alpha >0$: the smallest $\alpha$, the more probable it is to sample from one class only. 

\subsubsection{Model Architecture and Hyper Parameters}
To present the detailed architecture of the models used, we adopt the following compact notation:

L(\#outputs) represents a \textbf{fully-connected linear layer}, C(output channels) represents a \textbf{2D-convolutional layer} of kernel size $3$ and padding $1$, R stands for \textbf{ReLU activation}, B stands for \textbf{batch-normalization}, and D represents \textbf{dropout} with probability $0.25$, S stands for \textbf{log-softmax} and NLL for \textbf{negative log-likelihood loss}. 

The architecture of the model used and the experimental setup are proposed in \cref{tab:experimental_setting}.

\begin{table}[h]
    \centering
    \caption{Detailed experimental setting}
    \label{tab:experimental_setting}
    \begin{tabular}{@{} p{0.2\textwidth} >{\centering\arraybackslash}p{0.2\textwidth} >{\centering\arraybackslash}p{0.2\textwidth} >{\centering\arraybackslash}p{0.2\textwidth} @{}}
    \toprule
    Dataset & \multicolumn{2}{c}{MNIST}  & CIFAR-10 \\
     Model type &  \multicolumn{2}{c}{CNN} &  CNN \\
     Model architecture & \multicolumn{2}{c}{C($16$)-R-M-L($10$)-S}& {\small C($32$)-B-R-M-C($64$)-B-R-M-C($128$)-B-R-D-L($128$)-R-D-L($10$)-S}  \\
     Loss & \multicolumn{2}{c}{NLL} & NLL \\
     Batch size & \multicolumn{2}{c}{$64$} & $64$ \\
     Learning rate & \multicolumn{2}{c}{$\eta_{op}=0.1$} & $\eta_{op}=0.5$ \\
     Momentum & \multicolumn{2}{c}{$\beta = 0.9$} & $\beta = 0.99$\\
     Number of Iterations & \multicolumn{2}{c}{$T=300$} & $T=5000$\\
    \cmidrule(lr){2-3}
    Number of honest nodes & $|\H| = 26$ & $|\H| = 20$ & $|\H|=16$\\
    Graph & Two Worlds & Erdös Renyi & Two Worlds \\
    Graph parameter & $k = 8$ & $p\in [0.26, 1]$ & $k=6$ \\
     Data Heterogeneity &  $\alpha = 1$&  $\alpha = 1$ & $\alpha=5$\\
    
     Byzantine weight &  $b\in \{1,\ldots, 11\}$ & $b = 3$ & $b=3$ \\
     Number of seeds & $1$ & $1$ & $1$\\
    \bottomrule
    \end{tabular}  
\end{table}

\section{Experiments}
\label{app:experiments}

In \cref{app:sec:expe_two_worlds} we provide experiments with two world graphs taken as a communication network, both on learning tasks with MNIST and CIFAR-10 datasets, and on an averaging problem. Both on the MNIST and averaging task, we a varying amount of Byzantine weight to investigate the empirical robustness of each algorithm. 

In \cref{app:sec:expe_erdos_renyi} we provide experiments on Erdos Renyi networks on MNIST and an averaging task. We study here the influence of the connectivity of the network on the robustness by varying the probability of each pair of honest nodes being connected.

\subsection{Experiments with Two World graphs}
\label{app:sec:expe_two_worlds}

For both the Averaging experiments and the MNIST experiments, we fixed the subgraph of honest nodes to be $\G_{\H}:=[\G_{m=13,k=8,c=1}]_{\H}$, and the weight of Byzantine $b$ varies. Note that \cref{thm:breakdown_point_graph} predict that, on this graph, no algorithm can be $r$-robust when $b > 8$. 

\paragraph{Averaging experiments (\cref{app:plot:acp_two_worlds}).} Recall that $\G_{\H}:=[\G_{m=13,k=8,c=1}]_{\H}$, is built on two fully connected cliques of $13$ honest nodes, which are then connected. Here we initialize the parameters of honest nodes with a $\mathcal{N}(\vu,I_d/d)$ distribution $(d=5)$, where $\vu$ is equal to $+ (5,0,\ldots,0)^T$ for one of the two clique, and equal to $- (5,0,\ldots,0)^T$ for the other one. For each setting ($b$, Communication Algorithm, Attack), we perform experiments with 6 random seeds. The error plotted correspond to the mean square error $\sum_{i \in \H}\|\vx_i^t - \overline{\vx}_{\H}^0\|^2/\sum_{i \in \H}\|\vx_i^0 - \overline{\vx}_{\H}^0\|^2$ achieved after $100$ communication steps. The line corresponds to the average value among seeds, while the confidence interval corresponds to the maximum and minimal values encountered. 

\begin{figure*}[!h]
\includegraphics[width=0.99\textwidth]{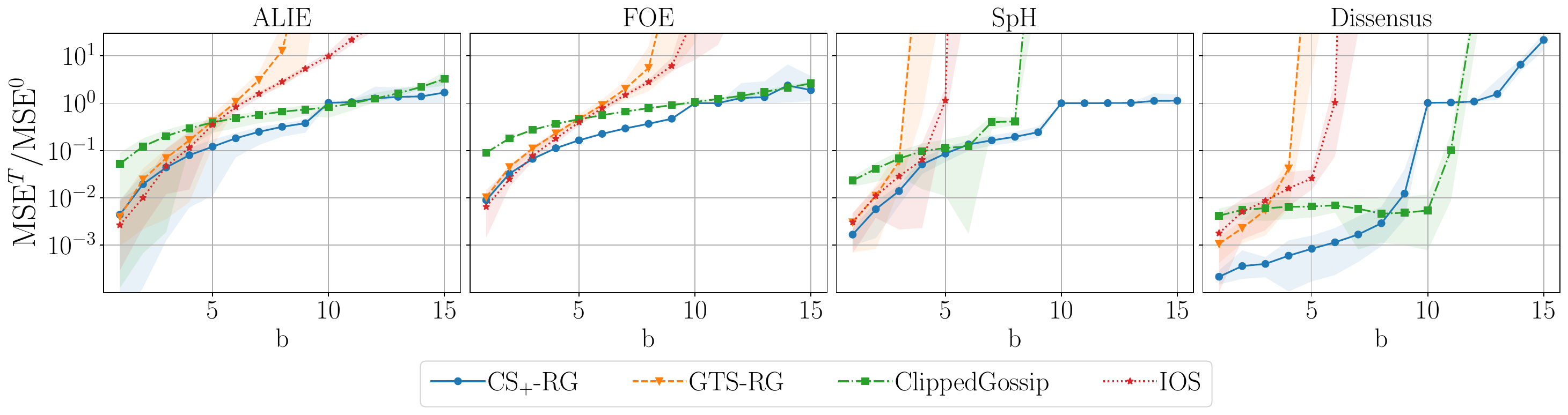}
\caption{Relative MSE on an averaging task after $100$ communication steps on a Two-Wold graph, with a varying weight of Byzantines~$b$. Here $\mu_2(\G_{\H})=16$.}
\label{app:plot:acp_two_worlds}
\end{figure*}

\paragraph{MNIST experiments (\cref{app:plot:minst_two_worlds}).} Experiments are conducted with a heterogeneity parameter $\alpha=1$. The error and accuracy displayed are after $300$ iterations. Further experimental details are in \cref{tab:experimental_setting}.

\begin{figure*}[!h]
\includegraphics[width=0.99\textwidth]{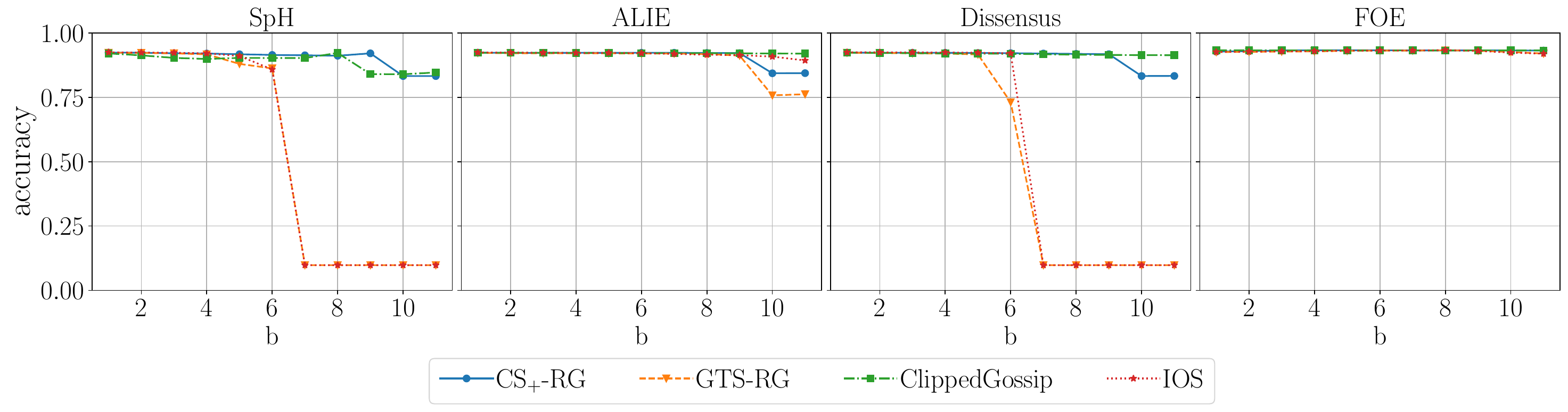}
\vskip-0.3in
\includegraphics[width=0.99\textwidth]{plots/loss_mnist-f_1-11-n_26-two_worlds_8-m_0.9-alpha_1.pdf}
\caption{Accuracy and training loss on MNIST $300$ optimization and communication steps, a Two Wold graphs, with a varying weight of Byzantines neighbors. Here $\mu_2(\G_{\H})=16$, and $\alpha=1$.}
\label{app:plot:minst_two_worlds}
\end{figure*}

\newpage
\paragraph{CIFAR-10 experiments (\cref{app:plot:cifar-10_loss,app:plot:cifar-10_acc}).} Experiments are conducted on CIFAR-10 Dataset with an heterogeneity parameters $\alpha=5$, on a Two Wold graph $\G_{\H}:=[\G_{m=8,k=6,c=1}]_{\H}$ with $b=1$. Further experimental details are in \cref{tab:experimental_setting}. We provide both test accuracy and train loss since the dynamic of the train loss does not always impact the test accuracy, specifically in the case of Spectral Heterogeneity and Dissensus attacks.

\begin{figure*}[!h]
\begin{minipage}{0.49\textwidth}
    \includegraphics[width=0.99\textwidth]{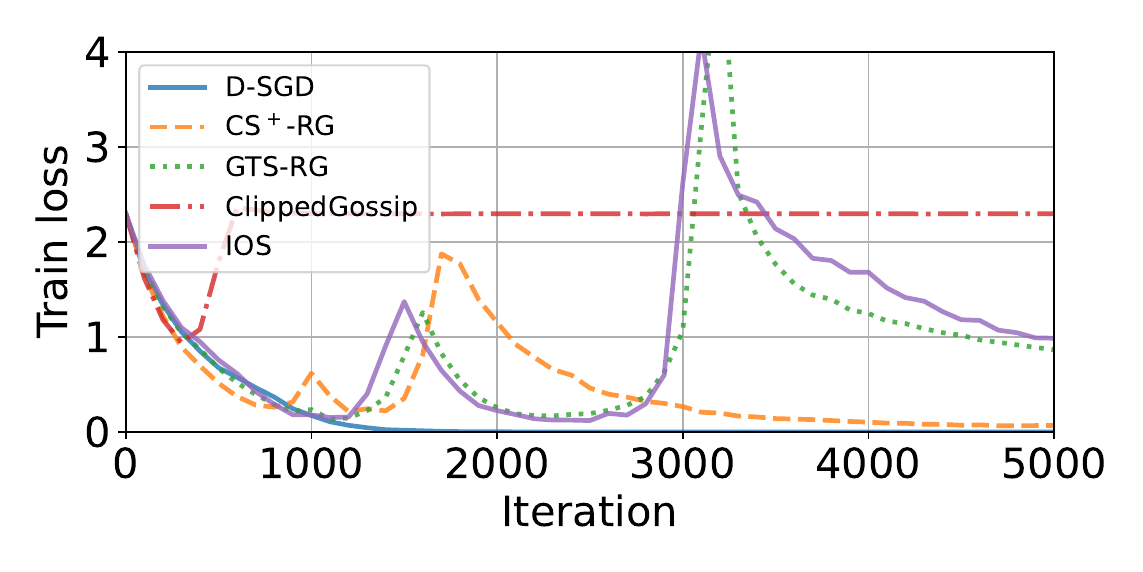}
\end{minipage}
\hfill
\begin{minipage}{0.49\textwidth}
    \includegraphics[width=0.99\textwidth]{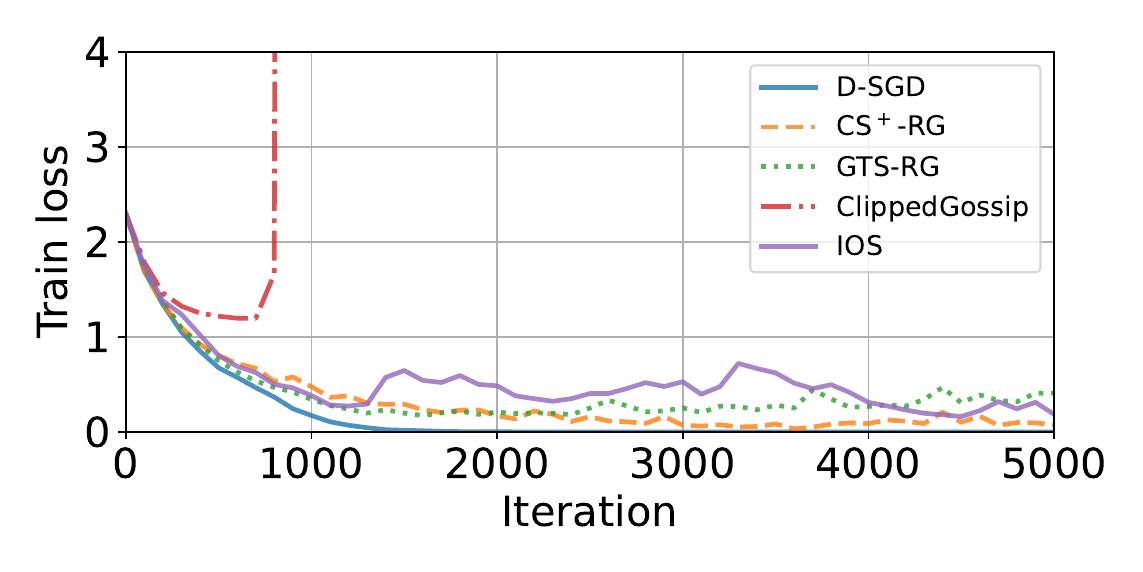}
\end{minipage}
\begin{minipage}{0.49\textwidth}
    \includegraphics[width=0.99\textwidth]{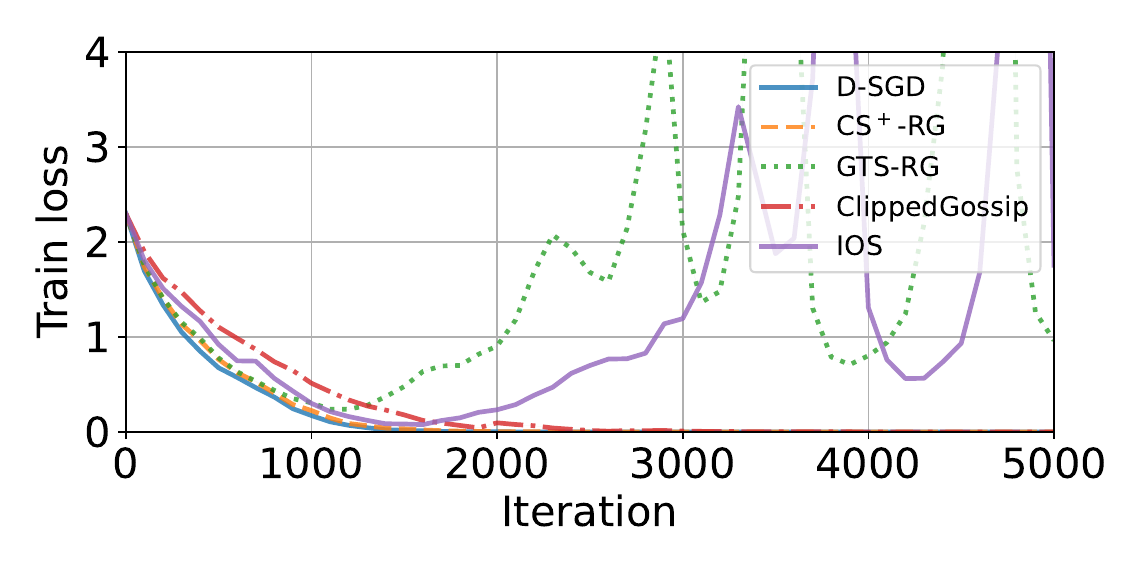}
\end{minipage}
\hfill
\begin{minipage}{0.49\textwidth}
    \includegraphics[width=0.99\textwidth]{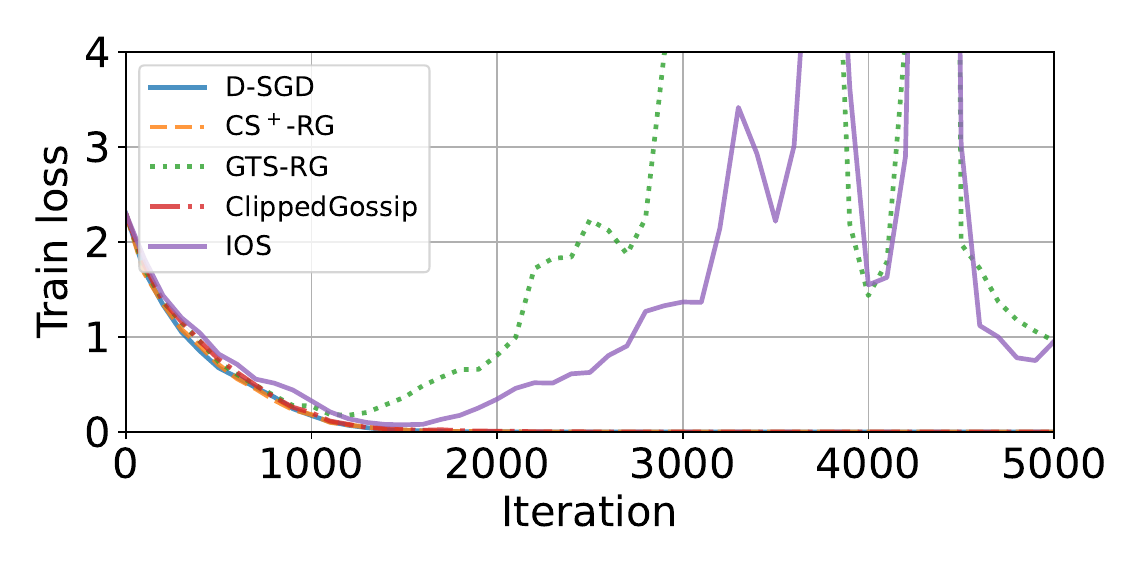}
\end{minipage}
\caption{Train loss on CIFAR-10 ($\alpha=5$) on a Two Wold graph with $\mu_2(\G_{\H})=12$ and $b=1$. Attacks tested are FOE (upper left), ALIE (upper right), SpH (lower left), and Dissensus (lower right). }
\label{app:plot:cifar-10_loss}
\end{figure*}

\begin{figure*}[!h]
\begin{minipage}{0.49\textwidth}
    \includegraphics[width=0.99\textwidth]{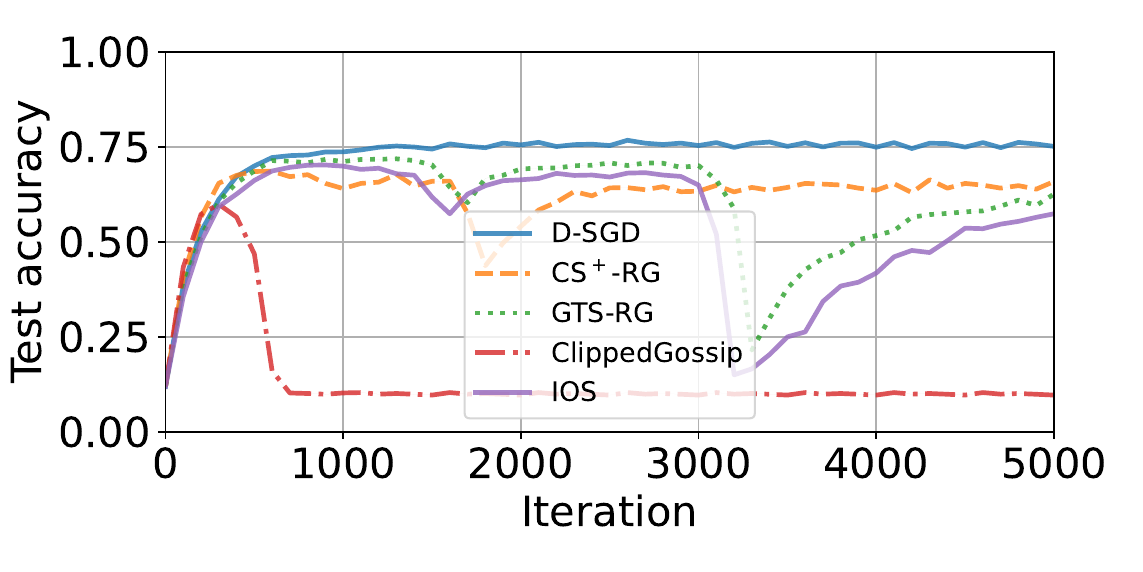}
\end{minipage}
\hfill
\begin{minipage}{0.49\textwidth}
    \includegraphics[width=0.99\textwidth]{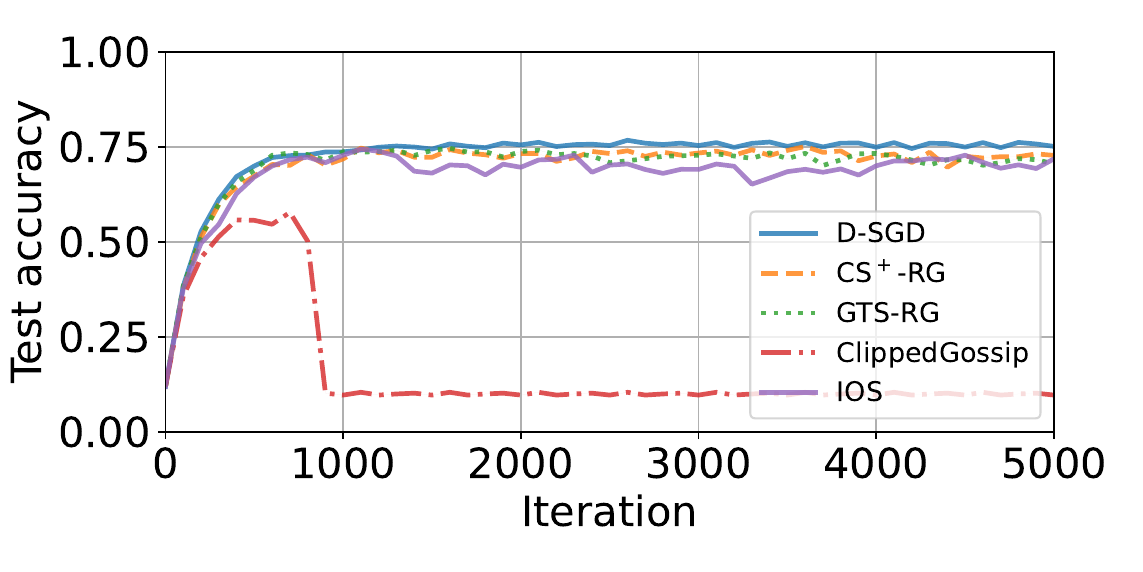}
\end{minipage}
\begin{minipage}{0.49\textwidth}
    \includegraphics[width=0.99\textwidth]{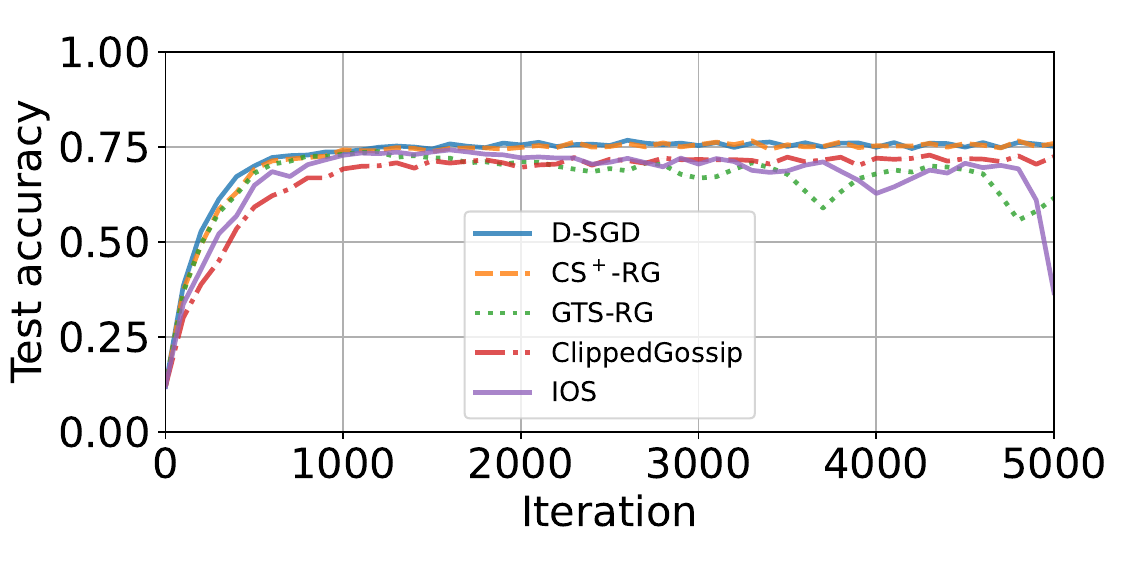}
\end{minipage}
\hfill
\begin{minipage}{0.49\textwidth}
    \includegraphics[width=0.99\textwidth]{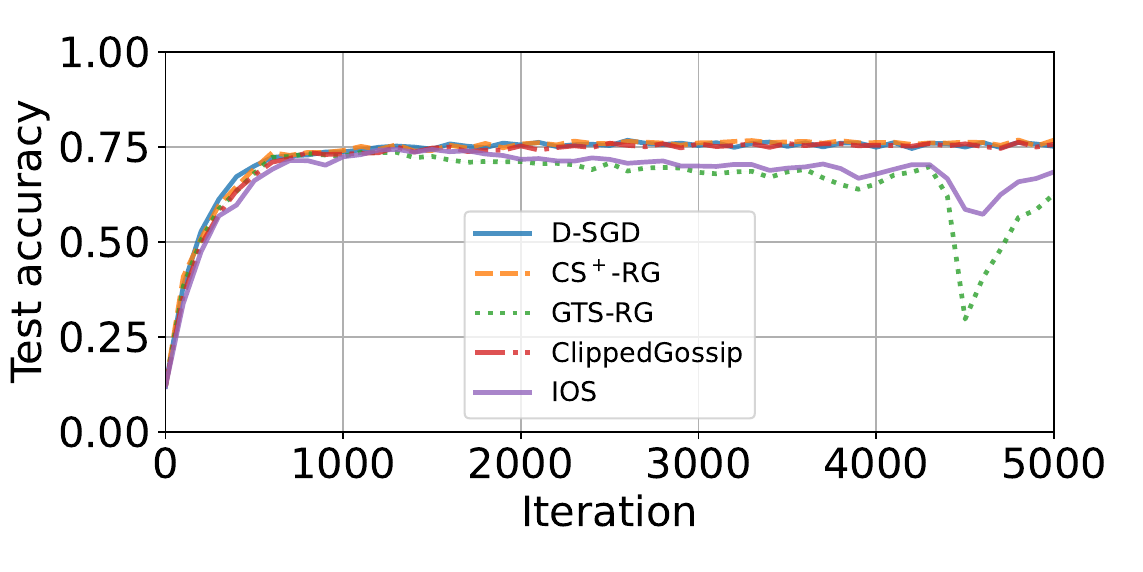}
\end{minipage}
\caption{Accuracy on CIFAR-10 ($\alpha=5$) on a Two Wold graph with $\mu_2(\G_{\H})=12$ and $b=1$. Attacks tested are FOE (upper left), ALIE (upper right), SpH (lower left), and Dissensus (lower right). }
\label{app:plot:cifar-10_acc}
\end{figure*}

\newpage
\subsection{Experiments with Erdos-Renyi graphs}

\label{app:sec:expe_erdos_renyi}

Experiments are conducted by using, as a subgraph of honest nodes, a random Erdos Renyi graph with 20 honest nodes. Each honest node is always adjacent to 4 Byzantine nodes. On each seed, we test 12 different values of $p \in [0.25,1]$, where p denotes the probability of an edge to exist. We plot the links between the algebraic connectivity of the graph (denoted $\mu_2$, the second smallest eigenvalue of the unitary weighted Laplacian) and the losses.

\paragraph{Averaging task (\cref{app:plot:acp_erdos_renyi}).} Nodes' parameters are initialized using a $N(0, I_5)$ distribution. Nodes perform $100$ (robust) gossip communication iterations, and the gain in terms of mean square error is plotted. Experiments are conducted on $6$ different seeds, and curves are smoothed using a moving average of size $4$.

\begin{figure*}[!h]
\includegraphics[width=0.99\textwidth]{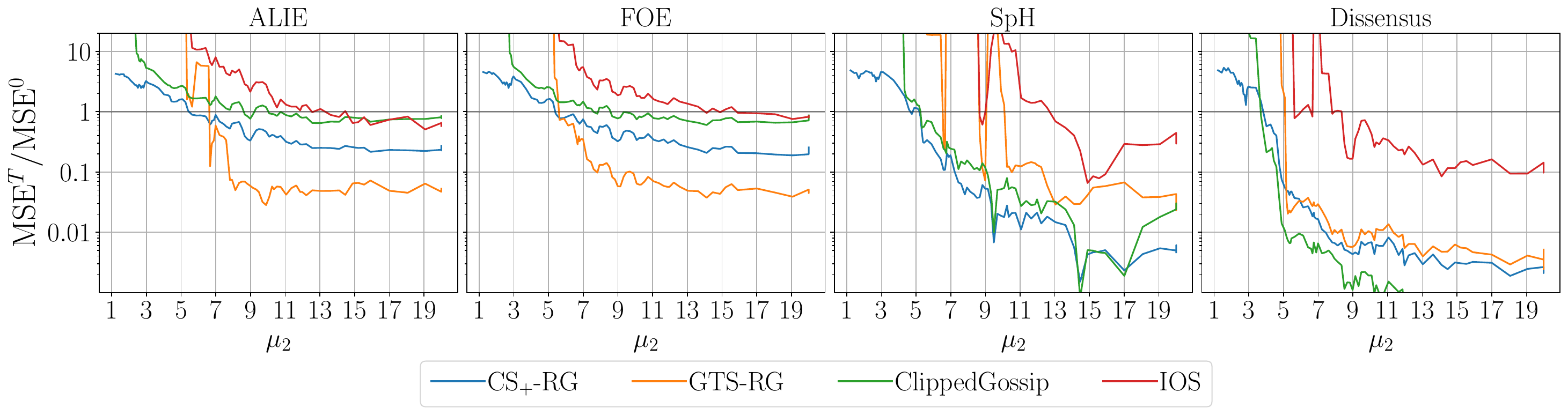}
\caption{Averaging task. Relative MSE after $100$ communication steps on randomly sampled Erdos-Renyi graphs with a fixed $b=4$.}
\label{app:plot:acp_erdos_renyi}
\end{figure*}

\paragraph{MNIST (\cref{app:plot:MNIST_erdos_renyi}).} The heterogeneity among nodes is set to $\alpha = 1$. All experiments run on the same seed.

\begin{figure*}[!h]
\includegraphics[width=0.99\textwidth]{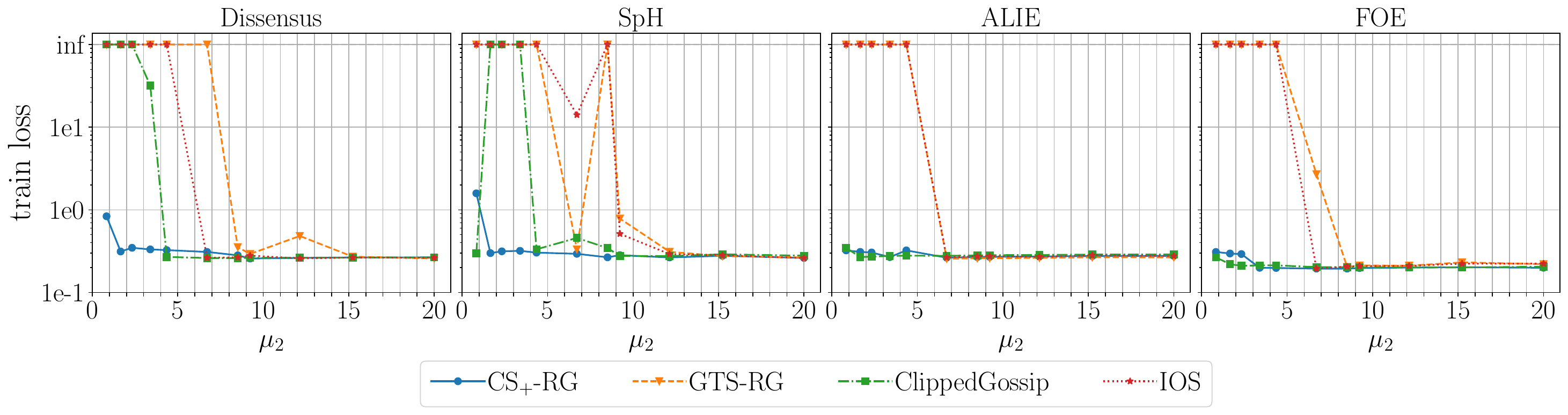}
\vskip-0.25in
\includegraphics[width=0.99\textwidth]{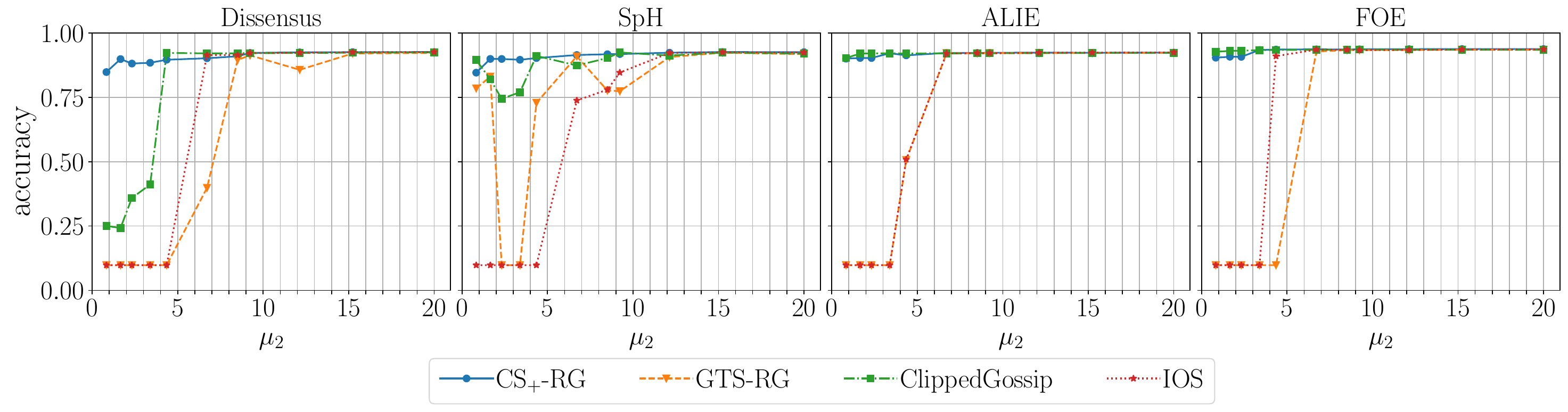}
\caption{Accuracy and training loss on MNIST($\alpha=1$) after $200$ optimization and communication steps on randomly sampled Erdos-Renyi graphs with a fixed $b=4$.}
\label{app:plot:MNIST_erdos_renyi}
\end{figure*}

\section{Analysis of $\RG$}
\label{app:robustness_analysis}

\subsection{Proof of \Cref{thm:consensus_tight}}
\label{app:robustness_analysis_proper}

We now prove \Cref{thm:consensus_tight}, and then use it to derive convergence for the Byzantine-robust decentralized stochastic gradient descent framework. 
Recall that nodes follow the update scheme below.
\begin{equation}
    \begin{cases}
        \label{app:proof:CG_gossip_actualization}
        \vx_i^{t+1} = \vx_i^t - \eta F\left( (\w_{ij} ;\vx_j^t - \vx_i^t)_{j\in n(i)} \right) \quad \text{if } i\in \H\\
        \vx_i^{t+1} = * \quad \text{if } i\in \B,
    \end{cases}
\end{equation}

We recall the following notations 

Before proving \cref{thm:consensus_tight}, we recall the following notations:
\begin{itemize}
    \item $\w_{ij} = - \mW_{ij} \ge 0$ denote the weight associated with the edge $i\sim j$ on the graph. 
    \item The matrix of honest parameters $\mX_{\H}^t := \begin{pmatrix}
        (\vx_1^t)^T \\
        \vdots \\
        (\vx_{|\H|}^t)^T
    \end{pmatrix} \in \R^{|\H| \times d}$.
    \item The error due to robust aggregation and Byzantine corruption:
    \[
    \forall i \in \H, \quad [\mE^t]_i := \sum_{j \in n_{\H}(i)} \w_{ij} (\vx_i^t - \vx_j^t) - F\left( (\w_{ij} ;\vx_j^t - \vx_i^t)_{j\in n(i)} \right)
    \]
\end{itemize}

\begin{lemma}
    \label{app:lemma:gossip-error_decomposition_matrix}
    \Cref{app:proof:CG_gossip_actualization} writes 
    \[\mX_{\H}^{t+1} = (\mI_{\H} - \eta \mW_{\H})\mX_{\H}^t + \eta \mE^t.\]
\end{lemma}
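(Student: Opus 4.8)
The plan is to prove the identity one honest row at a time: I will show that for each $i \in \H$, the $i$-th row of $(\mI_{\H} - \eta\mW_{\H})\mX_{\H}^t + \eta\mE^t$ coincides with $\vx_i^{t+1}$ as prescribed by the honest update in~\eqref{app:proof:CG_gossip_actualization}. The whole statement is essentially a bookkeeping identity, so once the terms are expanded the only genuine ingredient is the antisymmetry of the clipping operator.

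First I would expand the $i$-th row of the linear part. Since $\mW_{\H}$ is the Laplacian of the honest subgraph, $[\mW_{\H}]_{ii} = |n_{\H}(i)|$ and $[\mW_{\H}]_{ij} = -1$ for $j \in n_{\H}(i)$, so that $[\mW_{\H}\mX_{\H}^t]_i = \sum_{j \in n_{\H}(i)}(\vx_i^t - \vx_j^t)$. Consequently $[(\mI_{\H} - \eta\mW_{\H})\mX_{\H}^t]_i = \vx_i^t + \eta\sum_{j \in n_{\H}(i)}(\vx_j^t - \vx_i^t)$. I would then add $\eta[\mE^t]_i$ and simplify the honest contribution: the vectors $(\vx_j^t - \vx_i^t)$ coming from the Laplacian and the vectors $(\vx_i^t - \vx_j^t)$ appearing in the first part of $[\mE^t]_i$ cancel pairwise, leaving the residual $-\eta\sum_{j \in n_{\H}(i)}\clip(\vx_i^t - \vx_j^t; \tau_i^t)$.

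The one step requiring care — and the only place a property of $\clip$ is used — is to note that clipping to a ball centered at the origin is an odd map, i.e.\ $-\clip(\vx_i^t - \vx_j^t; \tau_i^t) = \clip(\vx_j^t - \vx_i^t; \tau_i^t)$, which rewrites the honest contribution as $\eta\sum_{j \in n_{\H}(i)}\clip(\vx_j^t - \vx_i^t; \tau_i^t)$. Combining this with the Byzantine term $\eta\sum_{j \in n_{\B}(i)}\clip(\vx_j^t - \vx_i^t; \tau_i^t)$ already carried by $\eta[\mE^t]_i$, and using the disjoint decomposition $n(i) = n_{\H}(i) \cup n_{\B}(i)$, the two sums merge into $\eta\sum_{j \in n(i)}\clip(\vx_j^t - \vx_i^t; \tau_i^t)$. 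Hence the $i$-th row equals $\vx_i^t + \eta\sum_{j \in n(i)}\clip(\vx_j^t - \vx_i^t; \tau_i^t) = \vx_i^{t+1}$, which is exactly the honest update, completing the proof. There is no substantial obstacle here: the computation is a cancellation plus the oddness of the clipping operator, which is precisely what allows the honest-neighbor Laplacian flow and the clipping defect to recombine into a single clipped increment.
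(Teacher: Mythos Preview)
Your proof is correct and follows essentially the same approach as the paper: a row-by-row verification using the Laplacian identity $[\mW_{\H}\mX_{\H}^t]_i = \sum_{j \in n_{\H}(i)}(\vx_i^t - \vx_j^t)$, the oddness of $\clip$, and the split $n(i)=n_{\H}(i)\cup n_{\B}(i)$. The only cosmetic difference is direction---the paper starts from the update rule and adds/subtracts the gossip term to reach the matrix form, whereas you start from the matrix form and collapse it back to the update rule.
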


\begin{proof}
Let $i \in \H$. We decompose the update due to the gossip scheme and consider the error term coming from both robust aggregation and the influence of Byzantine nodes.

\begin{align*}
    \vx_i^{t+1} 
    &= \vx_i^t  - \eta F\left( (\w_{ij} ;\vx_j^t - \vx_i^t)_{j\in n(i)} \right)\\
    &= \vx_i^t - \eta \sum_{j \in n_{\H}(i)} \w_{ij}(\vx_i^t - \vx_j^t)+ \eta \left(\sum_{j \in n_{\H}(i)} \w_{ij} (\vx_i^t - \vx_j^t) - F\left( (\w_{ij} ;\vx_j^t - \vx_i^t)_{j\in n(i)} \right) \right).
\end{align*}
Finally, the proof is concluded by remarking that $[W_{\H}\mX_{\H}^t]_i = \sum_{j \in n_{\H}(i)} \w_{ij}(\vx_i^t - \vx_j^t)$.
\end{proof}

We begin by controlling the norm of the error term $\|\mE^t\|^2_2$ in the case of $\CGours$.

\begin{lemma}[Control of the error]
\label{app:lemma:control_error_CGplus}
    Assume $F$ is a $(b,\rho)$ robust summation. Then the error is controlled by the heterogeneity as measured by the Laplacian matrix:
    \[
    \|\mE^t\|_2^2 \le 2\rho b \|\mX_{\H}^t\|_{\mW_{\H}}^2 = \rho b\sum_{i \in \H, j \in n_{\H}(i)} \w_{ij} \|\vx_i^t - \vx_j^t\|^2.
    \]
\end{lemma}

\begin{proof}
    We recall that in this case,
    \[
    \forall i \in \H, \quad [\mE^t]_i := \sum_{j \in n_{\H}(i)} \w_{ij} (\vx_i^t - \vx_j^t) - F\left( (\w_{ij} ;\vx_j^t - \vx_i^t)_{j\in n(i)} \right).
    \]
    
    By assumption, for all honest node $i \in \H$, the weight of Byzantine in his neighborhood is smaller than $b$, i.e $\sum_{j \in n_{\B}(i)}w_{ij} \le b$. Thus, applying the $(b,\rho)$ robustness of F yields
    \begin{align*}
        \|\mE^t\|^2 &= \sum_{i \in \H} \left\|
        \sum_{j \in n_{\H}(i)} \w_{ij} (\vx_i^t - \vx_j^t) - F\left( (\w_{ij} ;\vx_j^t - \vx_i^t)_{j\in n(i)} \right)
        \right\|^2_2 \\
        &\le \sum_{i \in \H} \rho b 
        \sum_{j \in n_{\H}(i)} \w_{ij} \|\vx_i^t - \vx_j^t\|^2\\
        &= 2 \rho b \|\mX_{\H}^t\|_{\mW_{\H}}^2,
    \end{align*}

    Where the last equality follows by noting that \( 2\|\mX_{\H}^t\|_{\mW_{\H}}^2 = \sum_{i \in \H, j \in n_{\H}(i)}\w_{ij}\|\vx_i^t - \vx_j^t\|^2\). Indeed, considering that $\G_{\H}$ is an undirected graph, $i \in n_{\H}(j) \iff j \in n_{\H}(i)$ and we have:
    \begin{align*}
        \|\mX_{\H}^t\|_{\mW_{\H}}^2 &= \left\langle \mX_{\H}, \mW_{\H} \mX_{\H}\right\rangle\\
        &= \sum_{i \in \H} \left\langle \vx_i^t, \sum_{j \in n_{\H}(i)} \w_{ij}(\vx_i^t - \vx_j^t) \right\rangle\\
        &= \sum_{i \in \H}\sum_{j \in n_{\H}(i)} \w_{ij}\left\langle \vx_i^t,  \vx_i^t - \vx_j^t \right\rangle\\
        &= \frac{1}{2} \sum_{i \in \H}\sum_{j \in n_{\H}(i)} \w_{ij} \left\langle \vx_i^t - \vx_j^t,  \vx_i^t - \vx_j^t \right\rangle\\
        \|\mX_{\H}^t\|_{\mW_{\H}}^2 &= \frac{1}{2} \sum_{i \in \H,\: j \in n_{\H}(i)} \w_{ij} \left\| \vx_i^t - \vx_j^t\right\|^2_2.
    \end{align*}
\end{proof}

Now that we control the error term, we can conclude the proof of~\Cref{thm:consensus_tight} using standard optimization arguments. Before proving this theorem, we prove the following one, from which~\Cref{csq:characterization_bias} is direct.

\begin{theorem}
    \label{app:thm:consensus_tight}
    Assume $F$ is a $(b,\rho)$ robust summation, and $\RG$ is the associated robust gossip algorithm.
    Let $b$ and $\mumin$ be such that $2\rho b \leq \mumin$, and let $\G \in \Gamma_{\mumin, b}$. Then, for any $\eta\le \mumaxGh^{-1}$, the output $\vy = \RG(\vx)$ (obtained by one step of $\RG$ on $\G$ from $\vx$) verifies:
    \begin{equation}
        \label{app:alpha_reduction_CGplus}
        \frac{1}{|\H|}\sum_{i \in \H} \|\vx_i^{t+1} - \overline{\vx}_{\H}^{t+1}\|^2 \le \left(1 - \eta\left(\mumin - 2\rho b\right) \right) \frac{1}{|\H|} \sum_{i \in \H} \|\vx_i^{t} - \overline{\vx}_{\H}^{t}\|^2
    \end{equation}
    \begin{equation}
        \label{app:lambda_reduction_CGplus}
        \|\overline{\vx}_{\H}^{t+1} - \overline{\vx}_{\H}^{t}\|^2 \le \eta  \frac{2\rho b}{|\H|}\sum_{i \in \H} \|\vx_i^{t} - \overline{\vx}_{\H}^{t}\|^2.
    \end{equation}
\end{theorem}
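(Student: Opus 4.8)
The plan is to work entirely with the matrix form of one step, namely $\mX_{\H}^{t+1} = (\mI_{\H} - \eta\mW_{\H})\mX_{\H}^t + \eta\mE^t$ from \cref{app:lemma:gossip-error_decomposition_matrix}, and to exploit that the honest Laplacian annihilates the all-ones direction, $\mW_{\H}\vone = 0$ (so $\vone^\top\mW_{\H}=0$ and $\mW_{\H}\overline{\mX}_{\H}^t=0$). Writing $\tilde{\mX} := \mX_{\H}^t - \overline{\mX}_{\H}^t$ for the centered iterate, both quantities reduce to controlling $\tilde{\mX}^{+} := \mX_{\H}^{t+1} - \overline{\mX}_{\H}^t = (\mI_{\H}-\eta\mW_{\H})\tilde{\mX} + \eta\mE^t$ and the mean displacement $\overline{\vx}_{\H}^{t+1}-\overline{\vx}_{\H}^t = \tfrac{\eta}{|\H|}\sum_{i\in\H}[\mE^t]_i$ (the gossip part leaves the mean fixed since $\vone^\top\mW_{\H}=0$). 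Throughout I would use the spectral (Poincar\'e-type) bounds that, for $\tilde{\mX}\perp\vone$, $\mumin\,V \le H \le \mumaxGh\,V$, together with the error control $\|\mE^t\|^2 \le 2(b+1)\,H$ from \cref{app:lemma:control_error_CGplus}, where $V := \|\tilde{\mX}\|^2 = |\H|\,\varh(\vx^t)$ and $H := \|\tilde{\mX}\|_{\mW_{\H}}^2$ and $\|\cdot\|$ is the Frobenius norm.

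For the bias estimate \cref{app:lambda_reduction_CGplus} I would bound the mean displacement directly. By Jensen, $\|\sum_{i\in\H}[\mE^t]_i\|^2 \le |\H|\,\|\mE^t\|^2$, so $\|\overline{\vx}_{\H}^{t+1}-\overline{\vx}_{\H}^t\|^2 \le \tfrac{\eta^2}{|\H|}\|\mE^t\|^2 \le \tfrac{\eta^2}{|\H|}2(b+1)H$. The step-size constraint closes it: since $H \le \mumaxGh V$ and $\eta\mumaxGh\le 1$ we have $\eta H \le V$, hence $\tfrac{\eta^2}{|\H|}2(b+1)H = \tfrac{2(b+1)}{|\H|}\eta(\eta H) \le \eta\tfrac{2(b+1)}{|\H|}V$, which is exactly \cref{app:lambda_reduction_CGplus}.

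For the variance estimate \cref{app:alpha_reduction_CGplus} I would measure the spread around the old mean and expand $\|\tilde{\mX}^{+}\|^2 = V + 2\eta\langle\tilde{\mX},\mE^t\rangle - 2\eta H + \eta^2\|\mE^t-\mW_{\H}\tilde{\mX}\|^2$, where $-2\eta H = -2\eta\langle\tilde{\mX},\mW_{\H}\tilde{\mX}\rangle$ is the gossip descent. The order-$\eta$ behaviour is already the right one: an AM--GM split $2\langle\tilde{\mX},\mE^t\rangle \le 2(b+1)V + H$, pairing $\|\tilde{\mX}\|^2=V$ against $\|\mE^t\|^2\le 2(b+1)H$, turns the order-$\eta$ terms into $2(b+1)\eta V - \eta H \le 2(b+1)\eta V - \eta\mumin V$ via $H\ge\mumin V$, i.e.\ precisely the factor $1-\eta(\mumin-2(b+1))$; the assumption $2(b+1)\le\mumin$ is exactly what makes this descent negative.

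The hard part will be the quadratic remainder $\eta^2\|\mE^t-\mW_{\H}\tilde{\mX}\|^2$: the AM--GM above is tight precisely when $H=\mumin V$ (i.e.\ $\tilde{\mX}$ aligned with the Fiedler vector), leaving no slack, and a crude bound $\|\mE^t-\mW_{\H}\tilde{\mX}\| \le \|\mE^t\| + \|\mW_{\H}\tilde{\mX}\|$ only reproduces the $\sqrt{2(b+1)\mumaxGh}$ (ClippedGossip-type, $O(\gamma)$-from-optimal) rate. Closing the gap must use the \emph{direction} of the error, not only its norm: since $\clip$ preserves directions, each clipped residual is colinear with its edge difference $\vx_i-\vx_j$, so $\mE^t$ is aligned with the gossip update $\mW_{\H}\tilde{\mX}$ and the two largely cancel in $\mE^t-\mW_{\H}\tilde{\mX}$; moreover the choice of $\tau_i$ as the $(b+1)$-th largest difference means clipping down-weights at most $b$ honest edges per node, which is what converts the contraction rate into $\mumin-2(b+1)$ and lets the remainder be absorbed through $\eta\mumaxGh\le 1$. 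This cancellation is the step I would spend the most care on. Finally, since the statement is phrased around the \emph{new} mean $\overline{\vx}_{\H}^{t+1}$, I would deduce it from the old-mean bound using the variance-minimisation identity $\sum_{i\in\H}\|\vx_i^{t+1}-\overline{\vx}_{\H}^{t+1}\|^2\le\sum_{i\in\H}\|\vx_i^{t+1}-\overline{\vx}_{\H}^{t}\|^2$ — the same observation the main text uses to make \cref{eq:thm_alpha_reduction} chainable.
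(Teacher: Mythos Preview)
Your treatment of the bias bound \cref{app:lambda_reduction_CGplus} is essentially the paper's: project onto $\vone$, use $\|\mP_{\vone}\mE^t\|\le\|\mE^t\|$, apply \cref{app:lemma:control_error_CGplus}, and close with $\eta\mumaxGh\le 1$.

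For the variance bound \cref{app:alpha_reduction_CGplus}, however, there is a genuine gap. After your AM--GM split $2\langle\tilde{\mX},\mE^t\rangle \le 2(b+1)V + H$ and the Poincar\'e bound $H\ge\mumin V$, you arrive at exactly $(1-\eta(\mumin-2(b+1)))V + \eta^2\|\mE^t-\mW_{\H}\tilde{\mX}\|^2$, with \emph{no slack whatsoever} to absorb the quadratic remainder. Your proposed fix --- that $\mE^t$ is ``aligned'' with $\mW_{\H}\tilde{\mX}$ because $\clip$ preserves directions --- does not hold: the Byzantine part of $[\mE^t]_i$ is $\sum_{j\in n_{\B}(i)}\clip(\vx_j-\vx_i;\tau_i)$, and since Byzantine nodes choose $\vx_j$ arbitrarily, these terms point in arbitrary directions (only their norm is controlled by $\tau_i$). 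In fact the paper's proof uses \emph{only} the norm bound $\|\mE^t\|^2\le 2(b+1)H$ from \cref{app:lemma:control_error_CGplus} and never any directional structure of $\mE^t$, so no such cancellation is available.

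The paper closes the gap by a different algebraic route: it works in the $\mW_{\H}$-weighted inner product via the pseudo-inverse $\mW_{\H}^\dagger$. Writing $z=\mX_{\H}^t-\mW_{\H}^\dagger\mE^t$ and applying the polarization identity $2\langle a,b\rangle_{\mW_{\H}}=\|a\|_{\mW_{\H}}^2+\|b\|_{\mW_{\H}}^2-\|a-b\|_{\mW_{\H}}^2$ with $a=\mX_{\H}^t$, $b=z$, the quadratic remainder becomes $-\eta\|z\|_{\mW_{\H}}^2+\eta^2\|z\|_{\mW_{\H}^2}^2$, which is \emph{structurally} nonpositive under $\eta\mumaxGh\le 1$ because $\mW_{\H}^2\preceq\mumaxGh\mW_{\H}$. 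The only surviving error contribution is then $\eta\|\mE^t\|_{\mW_{\H}^\dagger}^2\le \tfrac{\eta}{\mudeuxGh}\|\mE^t\|^2\le \tfrac{2(b+1)}{\mudeuxGh}\eta H$, which combines with $-\eta H$ to give the tight factor $-\eta(1-2(b+1)/\mudeuxGh)H$ before invoking $H\ge\mudeuxGh V$. The point is that the pseudo-inverse reformulation replaces your unmanageable Euclidean remainder by one in the $\mW_{\H}$-scale, where the step-size constraint kills it for free. Your final step (old-mean $\Rightarrow$ new-mean via $\sum_i\|\vx_i^{t+1}-\overline{\vx}_{\H}^{t+1}\|^2\le\sum_i\|\vx_i^{t+1}-\overline{\vx}_{\H}^{t}\|^2$) is correct, but the paper in fact proves the new-mean inequality directly.
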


\begin{proof}

\textbf{Part I: \cref{app:lambda_reduction_CGplus}.}

\cref{app:lambda_reduction_CGplus} is a direct consequence of \cref{app:lemma:control_error_CGplus}. Indeed applying ${\mP_{\1_{\H}} := \frac{1}{|\H|}\1_{\H}\1_{\H}^T}$ - the orthogonal projection on the kernel of $\mW_{\H}$ - on \cref{app:lemma:gossip-error_decomposition_matrix} results in
\[\mP_{\1_{\H}}\mX_{\H}^{t+1} =\mP_{\1_{\H}} (\mI_{\H} - \eta \mW_{\H})\mX_{\H}^t + \eta \mP_{\1_{\H}}\mE^t = \mP_{\1_{\H}} \mX_{\H}^t + \eta \mP_{\1_{\H}}\mE^t.\]

Taking the norm yields
\begin{align}
\label{app:eq:tight_lambda}
    \|\mP_{\1_{\H}}\mX_{\H}^{t+1} - \mP_{\1_{\H}}\mX_{\H}^{t}\|^2 = \eta^2 \| \mP_{\1_{\H}}\mE^t\|^2 \le \eta^2 \|\mE^t\|^2.
\end{align}

We now apply \cref{app:lemma:control_error_CGplus}, and use that $\mumaxGh$ is the largest eigenvalue of $\mW_{\H}$. It gives
\begin{align*}
    \|\mP_{\1_{\H}}\mX_{\H}^{t+1} - \mP_{\1_{\H}}\mX_{\H}^{t}\|^2 &\le \eta^2 2\rho b\|\mX_{\H}^t\|^2_{\mW_{\H}}\\
    &\le \mumaxGh\eta^2 2\rho b\|(\mI_{\H}-\mP_{\1_{\H}}) \mX_{\H}^t\|^2 
\end{align*}

Finally, \cref{app:lambda_reduction_CGplus} derives from $[\mP_{\1_{\H}}\mX_{\H}^{t}]_{i \in \H} = [\sum_{j \in \H}\vx_j^t]_{i \in \H} =[\overline{\vx}_{\H}^t]_{i \in \H} $ and ${\eta \mumaxGh \le 1}$.

\textbf{Part II: \cref{app:alpha_reduction_CGplus}.}

To prove \cref{app:alpha_reduction_CGplus}, we consider the objective function $\|(\mI_{\H} - \mP_{\1_{\H}})\mX^t\|^2$. We denote by $\mW_{\H}^\dagger$ the Moore-Penrose pseudo inverse of $\mW_{\H}$. We begin by applying \cref{app:lemma:gossip-error_decomposition_matrix}.
\begin{align*}
\|(\mI_{\H} - \mP_{\1_{\H}})\mX_{\H}^{t+1}\|^2
    &=\|\mX_{\H}^t - \eta \mW_{\H} \mX_{\H}^t + \eta \mE^t\|^2_{(\mI_{\H} - \mP_{\1_{\H}})} \\
    &=\|\mX_{\H}^t\|^2_{(\mI_{\H} - \mP_{\1_{\H}})} 
    - 2\eta \left\langle \mX_{\H}^t ,\mW_{\H} \mX_{\H}^t - \mE^t\right\rangle_{(\mI_{\H} - \mP_{\1_{\H}})}  + \eta^2 \left\|\mW_{\H} \mX_{\H}^t - \mE^t\right\|_{(\mI_{\H} - \mP_{\1_{\H}})} \\
    &=\|\mX_{\H}^t\|^2_{(\mI_{\H} - \mP_{\1_{\H}})} 
    - 2\eta \left\langle \mX_{\H}^t , \mX_{\H}^t - \mW_{\H}^{\dagger} \mE^t\right\rangle_{\mW_{\H}} + \eta^2 \left\|\mX_{\H}^t - \mW_{\H}^{\dagger}\mE^t\right\|_{\mW_{\H}^2}.
\end{align*}
Applying $2\langle \va , \vb \rangle = \|\va\|^2 + \|\vb\|^2 - \|\va - \vb\|^2$ leads to
\begin{align}
\label{app:eq:technical_tight_alpha}
\|\mX_{\H}^{t+1}\|^2_{(\mI_{\H} - \mP_{\1_{\H}})} - \|\mX_{\H}^t\|^2_{(\mI_{\H} - \mP_{\1_{\H}})} 
&= - \eta \left\| \mX_{\H}^t\right\|^2_{\mW_{\H}}  - \eta\left\|\mX_{\H}^t - \mW_{\H}^{\dagger} \mE^t \right\|^2_{\mW_{\H}} + \eta \left\| \mW_{\H}^{\dagger} \mE^t  \right\|^2_{\mW_{\H}} \\
    & \qquad\qquad\qquad+ \eta^2 \left\|\mX_{\H}^t - \mW_{\H}^{\dagger}\mE^t\right\|_{\mW_{\H}^2} \nonumber\\
&= - \eta \left\| \mX_{\H}^t\right\|^2_{\mW_{\H}} + \eta \left\| \mE^t  \right\|^2_{\mW_{\H}^{\dagger}}- \eta\left\|\mX_{\H}^t - \mW_{\H}^{\dagger} \mE^t \right\|^2_{\mW_{\H}} + \eta^2 \left\|\mX_{\H}^t - \mW_{\H}^{\dagger}\mE^t\right\|_{\mW_{\H}^2}.\nonumber
\end{align}

We now apply that $\mumaxGh$ (resp. $\mudeuxGh$) is the largest (resp. smallest) non-zero eigenvalue of $\mW_{\H}$.
\begin{align*}
\|\mX_{\H}^{t+1}\|^2_{(\mI_{\H} - \mP_{\1_{\H}})} - &\|\mX_{\H}^t\|^2_{(\mI_{\H} - \mP_{\1_{\H}})} \le - \eta \left\| \mX_{\H}^t\right\|^2_{\mW_{\H}} + \eta\frac{1}{\mudeuxGh} \left\| \mE^t  \right\|^2 - \eta(1- \mumaxGh\eta) \left\|\mX_{\H}^t - \mW_{\H}^{\dagger} \mE^t \right\|^2_{\mW_{\H}}.
\end{align*}

Eventually \cref{app:lemma:control_error_CGplus} with the assumption $\eta \le \nicefrac{1}{\mumaxGh}$ yields the result
\[
\|\mX_{\H}^{t+1}\|^2_{(\mI_{\H} - \mP_{\1_{\H}})}
\le \|\mX_{\H}^t\|^2_{(\mI_{\H} - \mP_{\1_{\H}})} - \eta \left(1 - \frac{2\rho b}{\mudeuxGh} \right) \left\| \mX_{\H}^t\right\|^2_{\mW_{\H}} 
\]
\[
\|\mX_{\H}^{t+1}\|^2_{(\mI_{\H} - \mP_{\1_{\H}})} \le  \left(1 - \eta \mudeuxGh \left(1 - \frac{2\rho b}{\mudeuxGh} \right) \right) \left\|\mX_{\H}^t\right\|^2_{(\mI_{\H} - \mP_{\1_{\H}})}.
\]
\end{proof}

To obtain \Cref{app:thm:consensus_tight}, we note that we can actually control the one-step variation of the $\mathrm{MSE}$ using $(1-\eta(\mumin -2\rho b))$ only, thus strengthening the first inequality. We rewrite the first part of \Cref{thm:consensus_tight} below for completeness.

\begin{corollary}
    \label{app:corr:alpha_reduction_on_mse} 
    Assume $F$ is a $(b,\rho)$ robust summation, and $\RG$ is the associated robust gossip algorithm.
    Let $b$ and $\mumin$ be such that $2\rho b \leq \mumin$, and let $\G \in \Gamma_{\mumin, b}$. Then, for any $\eta\le \mumaxGh^{-1}$, the output $\vy = \RG(\vx)$ (obtained by one step of $\RG$ on $\G$ from $\vx$) verifies:
    \begin{equation*}
        \frac{1}{|\H|}\sum_{i \in \H} \|\vx_i^{t+1} - \overline{\vx}_{\H}^{t}\|^2 \le \left(1 - \eta\left(\mumin - 2\rho b\right) \right) \frac{1}{|\H|} \sum_{i \in \H} \|\vx_i^{t} - \overline{\vx}_{\H}^{t}\|^2
    \end{equation*}
\end{corollary}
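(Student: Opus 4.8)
The plan is to reduce this Corollary to the two inequalities already proved in \Cref{app:thm:consensus_tight}, but to combine them more carefully than a naive sum permits. First I would expand around the \emph{fixed} vector $\overline{\vx}_{\H}^t$ using the parallel-axis (bias–variance) decomposition
\[
\frac{1}{|\H|}\sum_{i\in\H}\|\vx_i^{t+1}-\overline{\vx}_{\H}^t\|^2 = \frac{1}{|\H|}\sum_{i\in\H}\|\vx_i^{t+1}-\overline{\vx}_{\H}^{t+1}\|^2 + \|\overline{\vx}_{\H}^{t+1}-\overline{\vx}_{\H}^t\|^2,
\]
which in the matrix notation of \Cref{app:lemma:gossip-error_decomposition_matrix} reads $\|(\mI_{\H}-\mP_{\1_{\H}})\mX_{\H}^{t+1}\|^2 + \|\mP_{\1_{\H}}(\mX_{\H}^{t+1}-\mX_{\H}^{t})\|^2$. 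The variance summand is bounded by \Cref{app:alpha_reduction_CGplus} and the bias summand by \Cref{app:lambda_reduction_CGplus}, so one is tempted to simply add them; the difficulty is that adding the two stated bounds at face value yields the factor $1-\eta(\mumin-4(b+1))$, which is too weak and would spuriously demand $\mumin>4(b+1)$ rather than $\mumin\ge 2(b+1)$.

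The key observation is that the two summands consume the \emph{same} error budget $\|\mE^t\|^2$, split into orthogonal pieces. Retracing the proof of \Cref{app:alpha_reduction_CGplus} through \Cref{app:eq:technical_tight_alpha} and discarding the nonpositive term $-\eta(1-\eta\mumaxGh)\|\mX_{\H}^t-\mW_{\H}^{\dagger}\mE^t\|^2_{\mW_{\H}}$, the variance obeys
\[
\|(\mI_{\H}-\mP_{\1_{\H}})\mX_{\H}^{t+1}\|^2 \le \|(\mI_{\H}-\mP_{\1_{\H}})\mX_{\H}^{t}\|^2 - \eta\|\mX_{\H}^t\|^2_{\mW_{\H}} + \eta\|\mE^t\|^2_{\mW_{\H}^{\dagger}},
\]
and since $\mW_{\H}^{\dagger}$ annihilates $\ker \mW_{\H}=\mathrm{span}(\1_{\H})$, the last term only sees the range component, i.e. $\eta\|\mE^t\|^2_{\mW_{\H}^{\dagger}}\le\frac{\eta}{\mudeuxGh}\|(\mI_{\H}-\mP_{\1_{\H}})\mE^t\|^2$. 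Meanwhile \Cref{app:eq:tight_lambda} gives the bias \emph{exactly} as $\|\mP_{\1_{\H}}(\mX_{\H}^{t+1}-\mX_{\H}^{t})\|^2 = \eta^2\|\mP_{\1_{\H}}\mE^t\|^2$, which only sees the complementary kernel component.

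I would then merge the two error contributions using $\eta^2\le\eta\,\mumaxGh^{-1}\le\eta\,\mudeuxGh^{-1}$ (valid since $\eta\le\mumaxGh^{-1}$ and $\mumaxGh\ge\mudeuxGh$), so that $\eta^2\|\mP_{\1_{\H}}\mE^t\|^2\le\frac{\eta}{\mudeuxGh}\|\mP_{\1_{\H}}\mE^t\|^2$ and, by orthogonality of the two projections,
\[
\frac{\eta}{\mudeuxGh}\|(\mI_{\H}-\mP_{\1_{\H}})\mE^t\|^2 + \frac{\eta}{\mudeuxGh}\|\mP_{\1_{\H}}\mE^t\|^2 = \frac{\eta}{\mudeuxGh}\|\mE^t\|^2.
\]
This reconstitutes precisely the single term $\frac{\eta}{\mudeuxGh}\|\mE^t\|^2$ that appears in the proof of \Cref{app:alpha_reduction_CGplus}; put differently, the extra bias exactly fills the slack that bound left when it estimated $\|\mE^t\|^2_{\mW_{\H}^{\dagger}}$ by $\frac{1}{\mudeuxGh}\|\mE^t\|^2$ instead of its range component. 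From here the finish is identical to \Cref{app:thm:consensus_tight}: apply \Cref{app:lemma:control_error_CGplus} to get $\|\mE^t\|^2\le 2(b+1)\|\mX_{\H}^t\|^2_{\mW_{\H}}$, then use $\|\mX_{\H}^t\|^2_{\mW_{\H}}\ge\mudeuxGh\|(\mI_{\H}-\mP_{\1_{\H}})\mX_{\H}^t\|^2$ together with $\mudeuxGh\ge\mumin$ to obtain the factor $1-\eta(\mumin-2(b+1))$, then divide by $|\H|$. The main obstacle is exactly the double-counting described above; once one recognizes that the variance and bias errors are orthogonal parts of the common budget $\|\mE^t\|^2$ and that the step-size condition lets the $\eta^2$ bias coefficient be absorbed into $\eta/\mudeuxGh$, the remainder is the same computation as in \Cref{app:thm:consensus_tight}.
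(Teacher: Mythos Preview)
Your proposal is correct and follows essentially the same route as the paper's own proof: start from the bias--variance decomposition of the MSE around $\overline{\vx}_{\H}^t$, use \Cref{app:eq:technical_tight_alpha} (after dropping the nonpositive term) for the variance and \Cref{app:eq:tight_lambda} for the bias, observe that the two error contributions live in the orthogonal subspaces $\mathrm{ran}(\mW_{\H})$ and $\ker(\mW_{\H})$, absorb the $\eta^2$ coefficient into $\eta/\mudeuxGh$ via $\eta\le\mumaxGh^{-1}\le\mudeuxGh^{-1}$, recombine into $\frac{\eta}{\mudeuxGh}\|\mE^t\|^2$, and finish with \Cref{app:lemma:control_error_CGplus}. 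Your explicit framing of why naive addition of the two bounds in \Cref{app:thm:consensus_tight} loses a factor of $2$ is a nice clarification, but the underlying argument is identical to the paper's.
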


\begin{proof}
    We consider \cref{app:eq:tight_lambda} and \cref{app:eq:technical_tight_alpha}, which write
    \begin{align*}
        \|\mP_{\1_{\H}}\mX_{\H}^{t+1} - \mP_{\1_{\H}}\mX_{\H}^{t}\|^2 = \eta^2 \| \mP_{\1_{\H}}\mE^t\|^2 .
    \end{align*} 
    \[
    \|\mX_{\H}^{t+1}\|^2_{(\mI_{\H} - \mP_{\1_{\H}})} - \|\mX_{\H}^t\|^2_{(\mI_{\H} - \mP_{\1_{\H}})} \le - \eta \left\| \mX_{\H}^t\right\|^2_{\mW_{\H}} + \eta \left\| \mE^t  \right\|^2_{\mW_{\H}^{\dagger}}.
    \]
    It follows from the bias - variance decomposition of the MSE
    \[
    \|\mX_{\H}^{t+1} - \mP_{\1_{\H}}\mX_{\H}^{t}\|^2 = \|(\mI_{\H}-\mP_{\1_{\H}})\mX_{\H}^{t+1}\|^2 + \|\mP_{\1_{\H}}\mX_{\H}^{t+1} - \mP_{\1_{\H}}\mX_{\H}^{t}\|^2
    \]
    that 
    \begin{align*}
    \|\mX_{\H}^{t+1} - \mP_{\1_{\H}}\mX_{\H}^{t}\|^2 - \|\mX_{\H}^t\|^2_{(\mI_{\H} - \mP_{\1_{\H}})}
        & \le - \eta \left\| \mX_{\H}^t\right\|^2_{\mW_{\H}} + \eta \left\| \mE^t  \right\|^2_{\mW_{\H}^{\dagger}} + \eta^2 \| \mP_{\1_{\H}}\mE^t\|^2\\
        &\le - \eta \left\| \mX_{\H}^t\right\|^2_{\mW_{\H}} + \eta \frac{1}{\mudeuxGh} \left\| \mE^t  \right\|^2_{(\mI_{\H} - \mP_{\1_{\H}})} + \eta^2 \| \mP_{\1_{\H}}\mE^t\|^2
    \end{align*}
    As $\eta \le \frac{1}{\mumaxGh} \le \frac{1}{\mudeuxGh}$, we eventually get
    \begin{align*}
        \|\mX_{\H}^{t+1} - \mP_{\1_{\H}}\mX_{\H}^{t}\|^2 
        &\le \|\mX_{\H}^t\|^2_{(\mI_{\H} - \mP_{\1_{\H}})}  - \eta \left\| \mX_{\H}^t\right\|^2_{\mW_{\H}} + \eta \frac{1}{\mudeuxGh} \left\| \mE^t  \right\|^2
        \\
        &\le \|\mX_{\H}^t\|^2_{(\mI_{\H} - \mP_{\1_{\H}})}  - \eta \left(1 - \frac{2\rho b}{\mudeuxGh} \right)\left\| \mX_{\H}^t\right\|^2_{\mW_{\H}}
        \\
        &\le \left(1 -  \eta \mudeuxGh\left(1 - \frac{2\rho b}{\mudeuxGh} \right)   \right)\|\mX_{\H}^t\|^2_{(\mI_{\H} - \mP_{\1_{\H}})} .
    \end{align*}
    Which concludes the proof.
\end{proof}

\subsection{Proof of \cref{csq:characterization_bias}}
\label{app:csq_gossip}

A direct consequence of the above results is \cref{csq:characterization_bias}, as we show below.

\begin{proof}
Using the ($\alpha$, $\lambda$) reduction notations, we have:
\begin{equation*}
\begin{cases}
     \alpha = 1-\gamma \left( 1 - \delta)\right)\\
    \lambda = \gamma \delta .
\end{cases}
\end{equation*}

We denote here the drift increment $d_{t+1} = \|\mP_{\1_{\H}}\mX_{\H}^{t+1} - \mP_{\1_{\H}}\mX_{\H}^{t}\|$ and the variance at time $t$ as $\sigma_t^2 = \|\mX_{\H}^{t+1}\|^2_{(\mI_{\H} - \mP_{\1_{\H}})}$.

\cref{app:corr:alpha_reduction_on_mse} ensures that 
\[
\sigma_{t+1}^2 + d_t^2 \le \alpha \sigma_t^2.
\]

Hence,we have $\sigma_{t+1}^2 + d_{t+1}^2 \le \alpha \sigma_t^2$, and so $\sigma_{t+1}^2 \le \alpha \sigma_t^2$, which implies that $\sigma_t \le \alpha^{t/2} \sigma_0$. This proves the first part of the result. Using this, we write that \cref{app:thm:consensus_tight} ensures that 
\[
d_{t+1} \le \sqrt{\lambda} \sigma_t \le \sqrt{\lambda} \beta^t \sigma0,
\]
leading to:
\[
\sum_{t=1}^T d_{t} \le \sqrt{\lambda} \sum_{t=0}^{T-1} \sigma_t \le \sqrt{\lambda} \sum_{t=0}^{T-1} \alpha^{t/2} \sigma_0 \le \frac{\sqrt{\lambda}(1-\alpha^{T/2})}{1-\alpha^{1/2}} \sigma0,
\]
which proves the second part. The last inequality is obtained by writing.
\[
\|\mP_{\1_{\H}}\mX_{\H}^{T} - \mP_{\1_{\H}}\mX_{\H}^{0}\| \le \sum_{t=1}^T d_t \le \frac{\sqrt{\lambda}}{1-\sqrt{\alpha}}\sigma_0.
\]
Then, we use that $ 0 \leq \frac{1}{1 - \sqrt{1 - x}} \leq \frac{2}{x}$ for $x\geq 0$, with $x = \gamma(1 - \delta)$.

\end{proof}

\section{Proof of \cref{thm:breakdown_point_graph} - Upper bound on the breakdown point}
\label{app:proof:lower_bound}

We recall \cref{thm:breakdown_point_graph}:

\begin{theorem}
    Let $\mumin \geq 0$, $b\geq 0$ be such that $\mumin \le 2b$. Then for any $h\geq 0$ and any algorithm $\mathrm{Alg}$, there exists a graph $\G\in \Gamma_{\mumin,b}$ in which all honest nodes have a weight of honest neighbors $h(i)$ larger than $h$, and such that for any $r <1$, $\mathrm{Alg}$ is not $r$--robust on $\G$. 
\end{theorem}

We recall that, since the Byzantine nodes are unknown, given a communication network and an algorithm $\mathrm{Alg}$, the honest nodes follow $\mathrm{Alg}$ independently of the position of Byzantine nodes in the network. Thus, it is only when the position of the Byzantine nodes satisfies some hypothesis that $\mathrm{Alg}$ can be r-robust. 

Here we consider the hypothesis $H = \{\mu_2(\G_{\H})= 2b\} \cap \{\max_{i \in \H } b(i) \le b\}$. We show that for any algorithm and any $h \ge 0$, there exists a communication network $\cal N$ on which any algorithm $\mathrm{Alg}$ is not r-robust for (at least) one configuration of the Byzantines that verifies the hypothesis $H$.

The structure of the proof is thus the following:
\begin{enumerate}
    \item We introduce a family of unweighted communication networks $\left\{{\cal N}_{m,k}\right\}_{m \in \N,k \in [m]}$, such that ${\cal N}_{m,k}$ admits three symmetric configurations of Byzantines nodes with $2m$ honest nodes and $m$ Byzantines nodes, and each honest node is neighbor to $k$ Byzantine nodes and $m + k$ honest nodes. Then we show that an algorithm can not be $r$-robust in these three configurations with $r<1$.
    
    \item Now, for $c \ge 0$, let's denote $\G_{m,k,c}$ the weighted graph with a uniform weight $c$ on the edges and such that $\G_{m,k,c}$ is a weighted version of one of the three above-mentioned configurations of ${\cal N}_{m,k}$. We show that:
    \begin{enumerate}
        \item The weight of Byzantines in the neighborhood of any honest node is equal to $b = ck$.
        \item The algebraic connectivity of the honest subgraph $[\G_{m,k,c}]_{\H}$ verifies: 
        \[\mu_{2}([\G_{m,k,c}]_{\H})=2ck = 2b.\]
        \item For any $h \ge b$, there exists a graph within $\left\{\G_{m,k,c}\right\}_{m \in \N,k \in [m], c\in \R_+}$ such that all honest nodes have a weight associated to honest neighbors $h(i)$ larger than $h$.
    \end{enumerate}  
\end{enumerate}

This concludes the proof.

\begin{figure*}[!ht]
\begin{center}
\centerline{\includegraphics[width=0.4\linewidth]{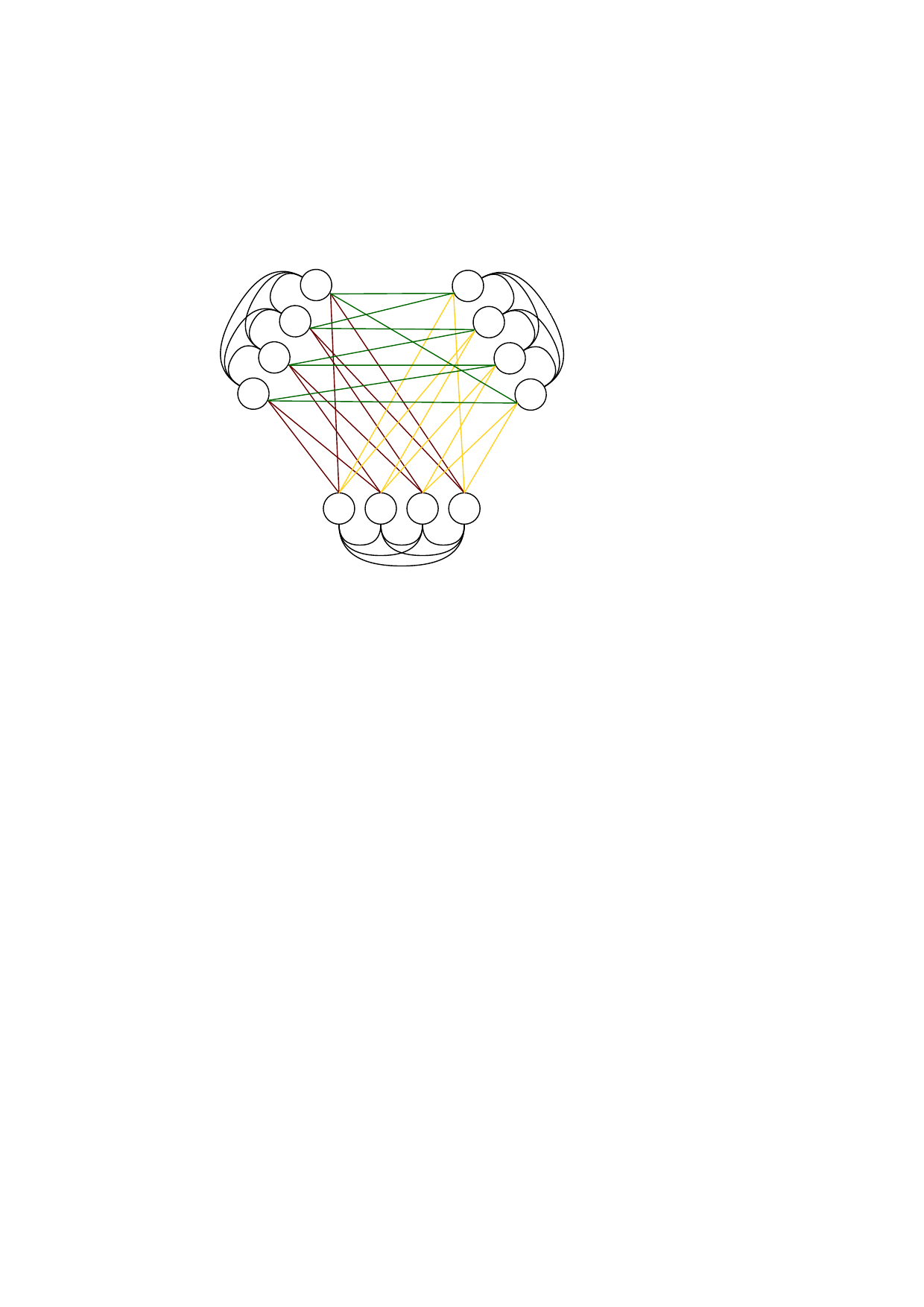}}
\caption{Topology of ${\cal N}_{m=4,k=2}$.}
\label{plot:two_world_topology}
\end{center}
\end{figure*}

\subsection{Definition of the family of graphs}

Let $m \in \N^*$ r and $k \in [m]$.

We define the communication network ${\cal N}_{m,k}$ as composed of three cliques of $m$ nodes: $C_1$, $C_2$, and $C_3$. Each node in $C_i$ is additionally connected to exactly $k$ nodes in $C_{i+1 \mod 3}$ and to $k$ nodes in $C_{i-1\mod 3}$. Moreover, those connections are assumed to be \textit{in circular order}, i.e., for any $j\in [m]$,  node $j$ in $C_i$ is connected to nodes $j, \dots, j+k \mod m$ in $C_{i+1 \mod 3}$. 

We assume that one of the cliques is composed of Byzantine nodes, each honest node having $k$ Byzantine neighbors, and there are $m$ Byzantine nodes among the $3m$ nodes.

\subsection{No algorithm can be $\alpha$-robust on ${\cal N}_{m,k}$.}

We now show that no algorithm can be robust on the communication network ${\cal N}_{m,k}$. 

\begin{lemma}
    Let one unknown clique within $C_1$, $C_2$, and $C_3$ be composed of Byzantine nodes, then no communication algorithm is $\alpha$-robust on ${\cal N}_{m,k}$.
\end{lemma}

Assume an $r$--robust algorithm exists on the communication network $\G_{H,k}$. Informally:
\begin{enumerate}[noitemsep, topsep=0in]
        \item We first show that if all nodes within one clique hold a unique parameter $\vx$, and receive this parameter from nodes of either of the two other cliques, then they cannot change their parameter.
        \item We then consider a setting where the two honest cliques hold different parameters, and we conclude that Byzantine nodes can force all honest nodes to keep their initial parameter at all times. This shows that in the considered setting, $r < 1$ is impossible.
\end{enumerate}

\begin{proof}\,\\
\textbf{Part I.}
Let $\mathrm{Alg}$ be an algorithm $r$--robust on ${\cal N}_{m,k}$ with $r<1$. We denote $\vx_i^+$ the output from node $i$ after running $\mathrm{Alg}$. 

We know that one of the cliques say $C_1$, is composed of honest nodes. Let the honest nodes within $C_1$ hold the parameter $\vx$. Nodes in another clique, say $C_2$, declare the parameter $\vx$ as well, while nodes in $C_3$ declare another parameter, say $\vy \neq \vx$. We show that all nodes $i \in C_1$ must output the parameter $\hat{\vx}_i = \vx$.

From the point of view of nodes in $C_1$ considering that the Byzantine clique is unknown, it is impossible to distinguish between these situations:
\begin{enumerate}[topsep=0in, noitemsep]
\item Situation I: $C_2$ is honest, and $C_3$ is Byzantine,
\item Situation II: $C_2$ is Byzantine, and $C_3$ is honest.
\end{enumerate}
Thus the outputs of nodes in $C_1$ after running $\mathrm{Alg}$, denoted $(\vx_i^+)_{i \in C_1}$, are the same in both situations.

In Situation I, nodes of $C_2$ are honest, and nodes in $C_1$ and $C_2$ have the same initial parameter; hence, the initial quadratic error is $0$. The $r$ criterion writes
\[
\sum_{i \in \H}\|\vx_i^+ - \overline{\vx}_{\H}\|^2 \le r \sum_{i \in \H}\|\vx_i - \overline{\vx}_{\H}\|^2 = 0.
\]
It follows that for any node $i$ in $C_1$, $\vx_i^+ = \vx$, \emph{i.e.}, nodes in $C_1$ do not change their parameters (in both Situation I and Situation II). 

\textbf{Part II.} Consider the setting where $C_1$ and $C_2$ are honest, while $C_3$ is Byzantine, and that nodes $C_1$ hold the parameter $\vx$, while nodes in $C_2$ hold the parameter $\vy \neq \vx$. 

As Byzantine nodes can declare different values to their different neighbors, nodes in $C_3$ can declare to nodes in $C_1$ that they hold the value $\vx$, and to nodes in $C_2$ that they hold the value $\vy$. Following Part I, nodes in $C_1$ and in $C_2$ cannot update their parameter, (i.e. $\forall i \in \H, \:\vx_i^+ = \vx_i$). Applying the $r$--robustness property brings:
\[
\sum_{i \in \H}\|\vx_i^+ - \overline{\vx}_{\H}\|^2 \le r \sum_{i \in \H}\|\vx_i - \overline{\vx}_{\H}\|^2 = r \sum_{i \in \H}\|\vx_i^+ - \overline{\vx}_{\H}\|^2,
\]
which implies that $r \ge 1$ since $\sum_{i \in \H}\|\vx_i^+ - \overline{\vx}_{\H}\|^2 > 0$. It follows that the Algorithm $\mathrm{Alg}$ is not $r$--robust on ${\cal N}_{m,k}$ for an $r<1$.

\end{proof}

\subsection{Spectral properties of the graph.}
We define as $\G_{m,k,c}$ the graph associated with the network ${\cal N}_{m,k}$ where all edges have weight $c$ and the nodes in one of the three cliques are Byzantine.

We show here that $\mu_2([\G_{m,k,c}]_{\H}) = 2k c$. This concludes the proof as we have:
\begin{enumerate}
    \item The weight of Byzantines in the neighborhood of any honest node is equal to $b = c  k$;
    \item Which is linked to the algebraic connectivity of the honest subgraph $\mu_2([\G_{m,k,c}]_{\H}) = c 2k$;
    \item While the weight of honest nodes in the neighborhood of honest nodes is equal to $h = c  (m + k)$, thus growths to infinity with $m$;
\end{enumerate}

\paragraph{Analysis.}

We denote $\mL_{\H}$ the Laplacian of $[\G_{m,k,c=1}]_{\H}$, the honest subgraph of the network ${\cal N}_{m,k}$ in which we provided unitary weights to edges. We show that $\mu_2([\G_{m,k,c=1}]_{\H}) = 2k$, which brings that $\mu_2([\G_{m,k,c}]_{\H}) = 2k c$.

\begin{lemma}
    \label{app:lemma:alg_connectivity_two_cliques_b_connected}
    The second smallest eigenvalue of the (unitary weighted) Laplacian matrix $\mL_{\H}$ of $[\G_{m,k,1}]_{\H}$ is equal to $\mu_2(\mL_{\H}) = 2k$.
\end{lemma}
\begin{proof}

Recall that $m\overset{\Delta}{=}m$. Let $\mM \in \R^{m \times m}$ be a circulant matrix defined as $\mM = \sum_{q=0}^{k-1} \mJ^q$, where $\mJ$ denotes the permutation
\[
\mJ := \begin{pmatrix}
0 & 1 & 0 & \dots & 0 \\
0 & 0 & 1 & \dots & 0 \\
\vdots  &     &     & \ddots & \vdots  \\
0 & & &        & 1 \\
1 & 0 & 0 & \dots  & 0
\end{pmatrix}.
\]

The Laplacian matrix of the honest subgraph of $\G_{H,k}$ can be written as:

\begin{equation*}
\mL_{\H} = \begin{pmatrix}
    m \mI_m - \1_m \1_m^T & 0\\
    0 & m \mI_m - \1_m \1_m^T 
\end{pmatrix}
+
\begin{pmatrix}
    k \mI_m & -\mM \\
    -\mM^T & k\mI_m
\end{pmatrix}
\end{equation*}
Hence
\begin{equation}
\label{app:eq:laplacian_two_cliques_b_connected}
    \mL_{\H} = (k+m) \mI_{2m} - 
\begin{pmatrix}
    \1_m \1_m^T & 0 \\
    0 & \1_m \1_m^T
\end{pmatrix}
-
\begin{pmatrix}
    0 & \mM\\
    \mM^T & 0
\end{pmatrix}.
\end{equation}

This matrix decomposition allows to have the eigenvalues of the the matrix $W_{\G}$.

\begin{lemma}
    \label{app:lemma:eigenvalues_two_cliques_b_connected}
    The eigenvalues of $\mL_{\H}$ are $\{0, 2k\} \cup \{ k+m \pm |\sum_{q=0}^{k-1} \omega^{pq}|; p \in \{1,\ldots, m-1\}\}$ where $\omega := \exp(\frac{2i\pi}{m})$.
\end{lemma}
To prove the \cref{app:lemma:eigenvalues_two_cliques_b_connected}, we first need to following result.

\begin{lemma}
\label{app:lemma:eigenvalues_of_block_symmetric_matrix}
If $\mA$ is a symmetric matrix in $\R^{2m \times 2m}$, which can be decomposed as 
    \(
    \mA= 
    \begin{pmatrix}
        0 & \mM \\
        \mM^T & 0
    \end{pmatrix},
    \)
    where $\mM \in \R^{m \times m}$ is a matrix with complex eigenvalues $\mu_0, \ldots, \mu_{m-1}$. 
    
    Then the eigenvalues of $\mA$ are $\{\pm |\mu_p|; p=0,\ldots, m-1\}$ .\\
\end{lemma}
\begin{proof}[Proof of \cref{app:lemma:eigenvalues_of_block_symmetric_matrix}]
Let $\mM = \mU^*\mD\mU$ be the diagonalization of $\mM$ where $\mD=\mathrm{Diag}(\mu_0,\ldots,\mu_{m-1})$, and $\mU$ is a unitary matrix, i.e. $\mU \mU^* = \mI$ where we denote as $\mU^* = \overline{\mU}^T$ the conjugate transpose of $\mU$, where the (simple) conjugate matrix is denoted $\overline{\mU}$. 

\Cref{app:lemma:eigenvalues_of_block_symmetric_matrix} follows from
\[
    \begin{pmatrix}
        0 & \mM \\
        \mM^T & 0
    \end{pmatrix} = \begin{pmatrix}
        0 & \mU^*\mD \mU \\
        \mU^T\mD\overline{\mU} & 0
    \end{pmatrix}
    \underset{\overline{\mA} = \mA}{=}
    \begin{pmatrix}
        0 & \mU^*\mD \mU \\
        \mU^*\overline{\mD} \mU & 0
    \end{pmatrix}.
    \]
    Hence
    \[
    \mA= 
    \begin{pmatrix}
        \mU^* & 0 \\
        0 & \mU^*
    \end{pmatrix}
    \begin{pmatrix}
        0 & \mD\\
        \overline{\mD} & 0 
    \end{pmatrix}
    \begin{pmatrix}
        \mU & 0\\
        0 & \mU
    \end{pmatrix}.
    \]
    A simple calculus (using that $\mD$ is diagonal) yields that all eigenvalues of 
    \(\begin{pmatrix}
        0 & \mD\\
        \overline{\mD} & 0 
    \end{pmatrix}\) are $\{\pm|\mD_p|; p=0, \ldots, m-1\}.$
\end{proof}

\begin{proof}[Proof of \cref{app:lemma:eigenvalues_two_cliques_b_connected}]

We start from the decomposition of \cref{app:eq:laplacian_two_cliques_b_connected} :
\begin{equation*}
    \mL_{\H} = (k+m) \mI_{2m} - 
\begin{pmatrix}
    \1_m \1_m^T & 0 \\
    0 & \1_m \1_m^T
\end{pmatrix}
-
\begin{pmatrix}
    0 & \mM\\
    \mM^T & 0
\end{pmatrix}.
\end{equation*}

We first notice that the subspace spanned by $(\1_m^T, + \1_m^T)^T$ and $(\1_m^T, - \1_m^T)^T$ is an eigenspace of $\begin{pmatrix}
    \1_m \1_m^T & 0 \\
    0 & \1_m \1_m^T
\end{pmatrix}$ associated the eigenvalue $m$, and the orthogonal subspace is associated with $0$. Furthermore these are eigenvectors of $\begin{pmatrix}
    0 & \mM\\
    \mM^T & 0
\end{pmatrix}$ associated with $k$ and $-k$. It follows that they are eigenvectors of $\mL_{\H}$ with eigenvalues $0$ and $2k$. We notice as well that the three matrices of \cref{app:eq:laplacian_two_cliques_b_connected} can be diagonalized in the same orthogonal basis.

The matrix $\mM$ is a circulant matrix, so it can be diagonalized in $\C$. The eigenvalues are $\{ \mu_p = \sum_{q=0}^{k-1} \omega^{pq}; p \in \{0,\ldots, m-1\}\}$, where $\omega := \exp(\frac{2i\pi}{m})$. The eigenvector associated with $\mu_p$ is $x_p = (1, \omega^{p}, \ldots, \omega^{(m-1)p})^T$. As such, with $U = (x_0, \ldots, x_{m-1})$ and $\mD = \diag(\mu_0, \ldots, \mu_{m-1})$, $\mM$ writes:
    \[
    \mM = \mU^*\mD\mU.
    \]

    Considering \cref{app:lemma:eigenvalues_of_block_symmetric_matrix}, the eigenvalue of $\begin{pmatrix}
    0 & \mM\\
    \mM^T & 0
\end{pmatrix}$ are $\{\pm |\mu_p|; p = 0,\ldots, m-1\}$, considering that $p=0$ corresponds to the eigenvalues $+k$ and $-k$, hence the eigenvectors $(\1_m^T, \1_m^T)^T$ and $(\1_m^T, - \1_m^T)^T$, we deduce that the eigenvalues of $\mL_{\H}$ are $\{\pm |\mu_p|; p=0\ldots m-1\}$.
\end{proof}

\textbf{End of the proof of \cref{app:lemma:alg_connectivity_two_cliques_b_connected}}. 

To prove \cref{app:lemma:alg_connectivity_two_cliques_b_connected}, considering the decomposition of \Cref{app:eq:laplacian_two_cliques_b_connected}, we only have to show that $m-k$ is always the second largest eigenvalue of the matrix 
    \[
    \mB:=\begin{pmatrix}
    \1_m \1_m^T & 0 \\
    0 & \1_m \1_m^T
\end{pmatrix}
+
\begin{pmatrix}
    0 & \mM\\
    \mM^T & 0
\end{pmatrix}.
    \]
    First, considering \cref{app:lemma:eigenvalues_two_cliques_b_connected}, the eigenvalues of $B$ are $\{m+k, m-k\} \cup \{\pm |\mu_p|; p \in \{1,\ldots, m-1\}\}$ with $\mu_p = \sum_{q=0}^{k-1} \omega^{pq}$. As such showing that $|\mu_p|\le m-k$ if $p \in \{1,\ldots, m-1\}$ yields the result.

    As $\omega^{mp} = \omega^{0p}=1$, we have that \(\sum_{q=0}^{m-1}\omega^{pq} (1 - \omega^{p}) = 0\). Hence, for $p\in \{1,\ldots, m-1\}$, as~$\omega^{p}\neq 1$,
    \[
    \sum_{q=0}^{m-1}\omega^{pq} = 0 \implies \mu_p = \sum_{q=0}^{k-1}\omega^{pq} = - \sum_{q=k}^{m-1}\omega^{pq}.
    \]
    It follows from $|\omega| = 1$ that for $p\in \{1,\ldots, m-1\}$, \(|\mu_p| \le m-k\).
\end{proof}

\section{$(b,\rho)$-robust summation rules}
\label{app:robust_summation_rules}

We recall the definition of summation rules.
 
\begin{definition}[$(b,\rho)$--robust summation] Let $b, \rho \ge 0$. An aggregation rule $F:(\R_+ \times \R^d)^n \rightarrow \R^d$ is a \emph{$(b, \rho)$--robust summation}, when, for any vectors $(\vz_i)_{i \in [n]}\in (\R^d)^n$, any weights $(\omega_i)_{i \in [n]} \in \R_+^n$ and any set $S \subset [n]$ such that $\sum_{i \in \overline{S}} \omega_i \le b$, there is
\[
\left\|F\big((\omega_i, \vz_i)_{i \in [n]}\big) - \sum_{i\in S} \omega_i \vz_i\right\|^2 \le \rho b \sum_{i\in S} \omega_i \|\vz_i\|^2.
\]
where $\overline{S} := [n]\backslash S$.
\end{definition}

\subsection{Clipping}

We first prove the robustness of the clipping-based summation. 

\begin{proposition}
\label{app:lemma:robustness_clipping}
    Let $b\ge 0$, then
    \begin{enumerate}[noitemsep, topsep=0in]
        \item (Practical) $\CSours$ is $(b,\rho)$--robust with $\rho =2$.
        \item (Oracle) $\CSoursor$ is $(b,\rho)$--robust with $\rho =1$. 
        \item (Oracle) $\CSHe$ is $(b,\rho)$--robust with $\rho =4$. 
    \end{enumerate}
\end{proposition}

\begin{proof}
 Let $(\omega_i, \vz_i)_{i \in [n]}\in (\R_+ \times \R^d)^n$, and $S \subset [n]$ such that $\sum_{i\in \overline{S}} \omega_i \le b$,
 
We consider for $\tau \ge 0$
   \[
   F\big((\omega_i, \vz_i)_{i=1,\ldots,n}\big) := \sum_{i=1}^n \omega_i \clip(\vz_i; \tau) .
   \]
    
    Then, by applying the triangle inequality we have
    \begin{align}
    \label{app:eq:clipping_control}
    \left\|F\big((\omega_i, \vz_i)_{i=1,\ldots,n}\big) - \sum_{i\in S} \omega_i \vz_i\right\|^2
        & = \left\| \sum_{i\in S} \omega_i \left(\clip(\vz_i;\tau) - \vz_i \right) +  \sum_{i\in \overline{S}} \omega_i \clip(\vz_i;\tau) \right\|^2 \notag\\
        &\le \left( \sum_{i\in S} \omega_i \|\vz_i - \clip(\vz_i;\tau)\| + \sum_{i\in \overline{S}} \omega_i \|\clip(\vz_i;\tau)\| \right)^2 \notag\\
        & \le \left( \sum_{i\in S} \omega_i (\|\vz_i\| - \tau)_+ + b \tau \right)^2  .
    \end{align}

    Where we used \( \sum_{i \in \overline{S}} \omega_{i} \le b\).

    \paragraph{1. Case of our clipping threshold.}
    
    We choose $\tau$ as 
    \[
    \tau = \max \left\{ \tau \ge 0: \sum_{i=1}^n \omega_{i} \1_{\|\vz_i\| \ge \tau} \ge 2b \right\} \overset{\Delta}{=}  \tau_{\text{ours}} \big( (\omega_i, \vz_i)_{i \in [n]}\big).
    \]
    This corresponds to lowering the clipping threshold until the sum of the weights of clipped vectors is essentially equal to $2b$. This ensures that the total weight of the honest vectors that are clipped falls between $b$ and $2b$. If there are ties at the clipping threshold, honest vectors can be arbitrarily denoted as \textit{clipped} or \textit{non-clipped}. Indeed, there is no clipping error incurred since the clipping threshold is the same as the actual value of the difference. Therefore, we do not have to accumulate error for the weight over $2b$, so the following equation always holds:
    \begin{equation*}
    2b \ge \sum_{i \in S} \omega_{i} \1_{i \: \text{clipped}} \ge b.
    \end{equation*}

    Which allows us to write:
    \begin{align*}
    \sum_{i \in S} \omega_{i} \left(\| \vz_i\| - \tau \right)_+ + b \tau &\le \sum_{i \in S} \omega_{i} (\| \vz_i\|- \tau)\1_{i \: \text{clipped}}  + b \tau\\
    &\le \sum_{i \in S} \omega_{i} \| \vz_i\|\1_{i \: \text{clipped}}.
    \end{align*}
    
    We conclude the proof using the Cauchy-Schwarz inequality:
    \begin{align*}
    \left\|F\big((\omega_i, \vz_i)_{i=1,\ldots,n}\big) - \sum_{i\in S} \omega_i \vz_i\right\|^2 &\le  \left(
        \sum_{i \in S} \sqrt{\omega_{i}}\|\vz_i\| \cdot \sqrt{\omega_{i}}\1_{i \: \text{clipped}}
        \right)^2\\
        &\le  \left(
        \sum_{i \in S} \omega_{i}\1_{i \: \text{clipped}}
        \right)\left(
        \sum_{i \in S} \omega_{i} \|\vz_i\|^2
        \right)\\
        &\le 2b\sum_{i \in S} \omega_{i} \|\vz_i\|^2.
    \end{align*}

    Where we used $\sum_{j \in n_{\H}(i)} \omega_{ij} \1_{j \: \text{clipped}} \le 2b$. Note that, if somehow it is possible to choose the clipping threshold such that the weight of clipped vectors within $S$ is equal exactly to $b$ then this factor $2$ disappears. Which correspond to our oracle clipping threshold $\tau_{\text{ours}}^{\text{or.}}$. The same result can be achieved when it is possible to identify a subset of $\{1,\ldots, n\}$ of weight $2b$ which includes the set $\overline{S}$, and if the weight with $\overline{S}$ sums exactly to $b$. This is for instance the case of the communication graph used in \Cref{app:proof:lower_bound}: nodes in (for instance) $C_1$ know that Byzantines nodes are among the nodes within $C_2$ and $C_3$, thus selecting all their neighbors that belong to these two other cliques leads to a subset of neighbors of weight $2b$ with exactly a weight $b$ corresponding to Byzantine neighbors.

    \paragraph{2. Clipping threshold of \citet{he2022byzantine}} 

    We plug in \Cref{app:eq:clipping_control} the following upper bound: 
    \[
    (\|\vz_i\| - \tau)_+ = \tau \left(\frac{\|\vz_i\|}{\tau} - 1\right)_+  \le \tau \left(\frac{\|\vz_i\|^2}{\tau^2} - 1\right)_+ \le \frac{\|\vz_i\|^2}{\tau}.
    \]

    This yields
    \begin{align*}
    \left\|F\big((\omega_i, \vz_i)_{i=1,\ldots,n}\big) - \sum_{i\in S} \omega_i \vz_i\right\|^2
        & \le \left( \sum_{i\in S} \omega_i \frac{\|\vz_i\|^2}{\tau} + b \tau \right)^2 .
    \end{align*}

    Then taking as clipping threshold the minimizer of the RHS, 
    \(
    \tau^* = \sqrt{\frac{1}{b}\sum_{i \in S} \omega_i \|\vz_i\|^2} \overset{\Delta}{=} \tau_{\text He}^{\text{or.}} \big( (\omega_i, \vz_i)_{i \in [n]}\big)
    \)
    leads to:
    \begin{align*}
    \left\|F\big((\omega_i, \vz_i)_{i=1,\ldots,n}\big) - \sum_{i\in S} \omega_i \vz_i\right\|^2
        & \le 4b\sum_{i \in S} \omega_i \|\vz_i\|^2. 
    \end{align*}

    However, the clipping threshold here requires an exact knowledge of the set $S$. Furthermore, it is unclear to what extend making an approximate estimate of this clipping threshold allows us to derive robustness guarantees.
\end{proof}

\begin{remark}
    A key point here is that this oracle clipping threshold corresponds to the \textbf{unique minimizer} within each squared term of the sum. Hence, considering for instance the adaptive practical clipping rule of \citep{he2022byzantine} leads to a \textbf{larger upper bound on the error}.
\end{remark}

\subsection{Geometric trimming a.k.a. NNA}

Let $(\omega_i, \vz_i)_{i \in [n]}\in (\R_+ \times \R^d)^n$, and $S \subset [n]$ such that $\sum_{i\in \overline{S}} \omega_i \le b$,

We recall the definition of GTS: Assume w.l.o.g. that $(\|\vz_{i}\|)_{i \in [n]}$ are sorted, i.e. $\|\vz_{1}\|\le \ldots \le \|\vz_{n}\|$, and denote $k^*(b):= \max \{k \in [n]; \sum_{i \ge k} \omega_{i} \ge b\}$ the index of the largest vector which has at least a weight $b$ of vector largest than him. (GTS) computes $\tilde{\omega}_{k^*(b)} := \sum_{i \ge k^*(b)} \omega_{i} - b$, and outputs
\[
    \GTS\big((\omega_i, \vz_i)_{i \in [n]}\big) = \tilde{\omega}_{k^*(b)} \vz_{k^*} +  \sum_{i < k^*(b)} \omega_{i} \vz_{i}.
\]

\begin{lemma}
\label{app:lemma:robustness_nna}
    Geometric trimming is $(b,\rho)$ -robust with $\rho=4$.
\end{lemma}

\begin{proof}
Let $(\omega_i, \vz_i)_{i \in [n]}\in (\R_+ \times \R^d)^n$, and $S \subset [n]$ such that $\sum_{i\in \overline{S}} \omega_i \le b$,
Without loss of generality we assume that $\tilde{\omega}_{k^*(b)}$ = 0\footnote{Which can be ensured by adding artificially the entry $(\tilde{\omega}_{k^*(b)}, \vz_{k^*(b)})$.}.

Thus the aggregation rules write
   \[
   F\big((\omega_i, \vz_i)_{i=1,\ldots,n}\big) := \sum_{i < k^*(b)} \omega_{i} \vz_{i}. = \sum_{i=1}^{n} \omega_{i}\vz_{i} \1_{i \text{ not removed}}
   \]
    
    Then, by applying the triangle inequality we have    
    \begin{align*}
    \left\|F\big((\omega_i, \vz_i)_{i=1,\ldots,n}\big) - \sum_{i\in S} \omega_i \vz_i\right\|^2
        & = \left\| \sum_{i \in S} \omega_{i} \vz_i\1_{i\text{  removed}} + \sum_{i \in \overline{S}} \omega_{i}\vz_i\1_{i\text{ not removed}} \right\|^2 \\
        &\le \left( \sum_{i\in S} \omega_i \|\vz_i\|\1_{i\text{  removed}} + \sum_{i\in \overline{S}} \omega_i \|\vz_i\|\1_{i\text{ not removed}} \right)^2 .
    \end{align*}

    As $\sum_{i=1}^n \omega_i \1_{i\text{ removed}} = b$, it holds that
    \begin{align*}
       &\sum_{i \in \overline{S}} \omega_i \le b = \sum_{i =1}^n \omega_i \1_{i\text{ removed}} = \sum_{i \in S} \omega_i \1_{i\text{ removed}} +  \sum_{i \in \overline{S}} \omega_i \1_{i\text{ removed}} \\
    \implies &\sum_{i \in \overline{S}} \omega_i \1_{i\text{ not removed}} \le \sum_{i \in S} \omega_i \1_{i\text{ removed}}.
    \end{align*}

    Furthermore, if $i$ is removed and $j$ is not, then $\|\vz_i\| \ge \|\vz_j\|$. It follows that 
    \[
    \sum_{i\in S} \omega_i \|\vz_i\|\1_{i\text{  removed}} \ge \sum_{i\in \overline{S}} \omega_i \|\vz_i\|\1_{i\text{ not removed}}.
    \]

    Consequently:
    \begin{align*}
    \left\|F\big((\omega_i, \vz_i)_{i=1,\ldots,n}\big) - \sum_{i\in S} \omega_i \vz_i\right\|^2
        & \le \left( 2\sum_{i\in S} \omega_i \|\vz_i\|\1_{i\text{  removed}} \right)^2 \\
        &= 4 \left( \sum_{i\in S} \sqrt{\omega_i} \|\vz_i\| \sqrt{\omega_i}\1_{i\text{  removed}} \right)^2\\
        &\le 4  \sum_{i\in S} \omega_i\1_{i\text{  removed}} \sum_{i\in S} \omega_i \|\vz_i\|^2 && \text{ using Cauchy-Schwarz}.
    \end{align*}
    Thus:
    \[
    \left\|F\big((\omega_i, \vz_i)_{i=1,\ldots,n}\big) - \sum_{i\in S} \omega_i \vz_i\right \|^2 \le 4b \sum_{i\in S} \omega_i \|\vz_i\|^2.
    \]

\end{proof}

\section{Proofs for D-SGD}
\label{app:proofs_DSGD}

\begin{proof}[Proof of Corollary~\ref{corr:DSGD_conv}]

This proof hinges on the fact that the proof of~\citet[Theorem 1]{farhadkhani2023robust} does not actually require that communication is performed using $\NNA$, but simply that the aggregation procedure respects $(\alpha, \lambda)$-reduction, which they prove in their Lemma 2. Then, all subsequent results invoke this Lemma instead of the specific aggregation procedure. $\CSours$-RG also satisfies $(\alpha, \lambda)$-reduction, as we prove in~\Cref{thm:consensus_tight}. We can then use their bounds on the errors out of the box.

Then, as $T$ grows, and ignoring constant factors, only the first and last terms in their Theorem 3 remain, leading to, for $i \in \H$: 
\begin{equation}
    \frac{1}{T}\sum_{t=1}^T \E\left[\left\| \nabla f_{\H}(\vx_i^t) \right\|^2\right] = \mathcal{O}\left(L\sigma\sqrt{\frac{f(\overline{\vx}_{\H}^0)-f(\vx^*)}{T}(|\H|^{-1} + C)} +  C \zeta^2\right),
\end{equation}
where $C = c_1 + \lambda + \lambda c_1$, with $c_1 = \alpha (1 + \alpha) / (1 - \alpha)^2$. Note that we give $\mathcal{O}()$ versions of the Theorems for simplicity, but \citet[Theorem 1]{farhadkhani2023robust} allows to derive precise upper bounds for any $T \geq 1$. 

\textbf{One-step derivation.} The one-step result is obtained by taking the values of $\alpha = 1 - \gamma(1 - \delta)$ and $\lambda = \gamma \delta$. On the one hand
\begin{equation*}
    c_1 = \frac{(1 - \gamma(1 - \delta))(2 - \gamma(1 - \delta))}{\gamma^2 (1 - \delta)^2} = O\left( \frac{1 - \gamma(1-\delta)}{\gamma^2 (1  - \delta)^2}\right).
\end{equation*}

On the other hand, $\lambda = \gamma \delta \le 1$ implies that 
$C=O(c_1)$. Both lead to
\begin{equation}
    \frac{1}{T}\sum_{t=1}^T \E\left[\left\| \nabla f_{\H}(\vx_i^t) \right\|^2\right] = \mathcal{O}\left(L\sigma\sqrt{\frac{f(\overline{\vx}_{\H}^0)-f(\vx^*)}{T}\left(|\H|^{-1} + \frac{1 - \gamma(1-\delta)}{\gamma^2 (1  - \delta)^2}\right)} + \frac{1 - \gamma(1-\delta)}{\gamma^2 (1  - \delta)^2}\zeta^2 \right),
\end{equation}

The result of \cref{corr:DSGD_conv} derives from the case $\gamma \ll 1$ (otherwise, the guarantees are essentially the same as in~\citet{farhadkhani2023robust}), and $\delta \ge \frac{1}{\H}$ (otherwise there is essentially no Byzantine).

\textbf{Multi-step derivation.} In the previous case, we see that $C$ is dominated by the $c_1$ term since $c_1 >> \lambda$. In particular, the guarantees would increase if we were able to trade-off some $\alpha$ for some $\lambda$, which is possible by using multiple communications steps. This is what we do, and take enough steps that $c_1 = o(\lambda)$ (i.e., $\alpha \approx 0$), so that $C \approx \lambda$. Following~\Cref{csq:characterization_bias}, it can be achieved by performing $\tilde{O}(\gamma^{-1}(1 - \delta)^{-1})$ steps of F-RG, where logarithmic factors are hidden in the $\tilde{O}$ notation. We then plug the multi-step $\lambda$ value from \Cref{csq:characterization_bias} to obtain the result: as $\lambda = O\left(\frac{\delta}{\gamma(1-\delta)}\right)$, the multi-communication steps convergence bound writes

\begin{equation}
    \frac{1}{T}\sum_{t=1}^T \E\left[\left\| \nabla f_{\H}(\vx_i^t) \right\|^2\right] = \mathcal{O}\left(L\sigma\sqrt{\frac{f(\overline{\vx}_{\H}^0)-f(\vx^*)}{T}\left(|\H|^{-1} + \frac{\delta}{\gamma (1  - \delta)^2}\right)} + \frac{\delta}{\gamma (1  - \delta)^2}\zeta^2 \right).
\end{equation}
\end{proof}

\end{document}